\providecommand{\U}[1]{\protect\rule{.1in}{.1in}}
\newcommand\xqed[1]{%
  \leavevmode\unskip\penalty9999 \hbox{}\nobreak\hfill
  \quad\hbox{#1}}
\newcommand\quadradinho{\xqed{$\triangle$}}
\newenvironment{example}[1][Example]{\addtocounter{theorem}{1} \noindent\textbf{#1 \arabic{theorem}.} }{\qed \\ }
\newcounter{theorem}[section]
\numberwithin{theorem}{section}
\newtheorem{corolario}[theorem]{Corollary}
\newtheorem{definition}[theorem]{Definition}
\newtheorem{lemma}[theorem]{Lemma}
\newtheorem{prop}[theorem]{Proposition}
\newtheorem{teorema}[theorem]{Theorem}
\newcommand{\D}{\partial}
\newcommand\norm[1]{\left\lVert#1\right\rVert}
\newcommand{\pr}[2]{\langle #1,#2\rangle}
\DeclareMathOperator{\im}{Im}
\DeclareMathOperator{\K}{\mathbb{K}} %Caso queira trocar para os reais
\DeclareMathOperator{\E}{\mathbb{E}}
\DeclareMathOperator{\Prob}{\mathbb{P}}
\tikzset{main node/.style={circle,fill=blue!20,draw,minimum size=1cm,inner sep=0pt},}
\date{\today}
\begin{document}

\begin{frontmatter}
\title{Path Cohomology of Locally Finite Digraphs, Hodge's Theorem and the $p$-Lazy Random Walk}
\author[1]{André M. S. Gomes\corref{cor1}}
\ead{andremsg@unicamp.br}
\author[2]{Daniel Miranda}
\ead{daniel.miranda@ufabc.edu.br}
\author[1,3]{Renata Possobon}
\cortext[cor1]{Corresponding author}
\ead{renata.possobon@mis.mpg.de}

\affiliation[1]{{Universidade Estadual de Campinas (UNICAMP)},
addressline={R. Sérgio Buarque de Holanda, 651},
postcode={13083-859},
city={Campinas},
country={Brasil}}

\affiliation[2]{{Universidade Federal do ABC (UFABC)},
addressline={Av. dos Estados 5001},
postcode={09280-560},
city={Santo André},
country={Brasil}}

\affiliation[3]{{Max Planck Institute for Mathematics in the Sciences (MPI MIS)},
addressline={Inselstrasse, 22},
postcode={04103},
city={Leipzig},
country={Germany}}

\begin{abstract}

The study of Markov chains on discrete spaces, such as digraphs, has captivated mathematicians in recent decades due to its interconnectedness with topology, geometry, dynamics, spectral theory, and differential equations.  Furthermore, extensive exploration of these multifaceted relationships has been pursued for their practical utility in diverse fields, including machine learning and image segmentation. In recent times, these interrelations have been generalized to higher dimensions within the framework of finite-dimensional simplicial complexes.

In this paper, we embark on a further extension of these concepts. Initially, we introduce a  cohomology of infinite (though locally finite) digraphs in arbitrary dimensions. Subsequently, in the latter portion of this manuscript, we define a fresh family of Laplace operators and conduct an examination of their spectrum, culminating in the proof of the Hodge Decomposition Theorem within this framework. Finally, we conclude by presenting a Markov chain, the $p$-Lazy Random Walk, whose asymptotic behavior  is intrinsically linked to these cohomologies, while its mixing time is related to the the spectrum of our Laplace operators.

This development opens doors to numerous unexplored questions, particularly regarding potential generalizations of the Ollivier-Ricci curvature to this topology and these Laplacians.

 % The study of Markov chains on discrete spaces such as digraphs has amused mathematicians in the last decades for its relations to topology, geometry, dynamics, spectral theory and differentiable equations. Furthermore, such rich relations has been explored in applications in many areas, as Machine Learning and Image segmentation. %For instance, graph connectedness is related to the irreducibility of Markov chains and their rate of convergence is related to the spectrum of the Laplacian, which is related to the Ricci curvature of the subjacent graph.

  %      These relations have been generalized to higher dimension in the finite dimensional simplicial complexes framework. 

   %     In this paper we generalize them in another direction. We start by defining a new cohomology to infinite (but locally finite) digraphs in arbitrary dimensions. To, in the second part of this paper, define a new family of Laplace operators and to study its spectrum and to prove the Hodge Decomposition Theorem in this context. And we finish it presenting a Markov chain whose asymptotic behavior is related to the existence of homologies and whose mixing time is related to the spectrum of our Laplace operators.

%This opens a whole new world of open questions, such as the possibilities of generalizations of the Ollivier-Ricci curvature to this topology and these Laplacians.
    
	\end{abstract}
\begin{keyword}
Digraphs\sep (Co)homology\sep Laplacian\sep Random Walks\sep Markov Chains\sep Orientability
\MSC 60G50 \sep 55N99 \sep 05C50 \sep	05C81

\end{keyword}

 \end{frontmatter}
% \keywords{Digraphs, (Co)homology, Laplacian, Random Walks, Markov Chains, Orientability}

%	\tableofcontents

    \section*{Introduction}

    The relation between stochastic processes and the dynamical, topological and geometrical properties
    of their underlying phase spaces has been extensively investigated in recent decades.
     For instance, diffusion processes associated to Laplace operators, such as the Brownian motion and random walks are also broadly related to studies of stochastic differential equations, 
    the solutions of which are closely associated with the geometry of their phase spaces.
    For a more thorough approach we recommend the books \cite{matthes1986ikeda} and \cite{le2016brownian}.

    A classical example of such powerful relations is the $\frac{1}{2}$-lazy random walk on a graph $G=(V,E)$, in which the walker starts at a given vertex and, at each step it remains in the same with probability $1/2$ and moves to one of its $k$ neighbors with probability $\frac{1}{2k}$. If $\textbf{p}_n^v(w)$ is the probability of the walk starting in the vertex $v$ reaches $w$ after $n$ steps, it is a classical result that the asymptotic behavior of this process revels the topological and algebraic connectivity of the underlying graph, and in particular the dimension of the $0$-th homology.
    
    In \cite{ori}, Parzanchevski and Rosenthal presented a stochastic process on finite and regular simplicial complexes associated to the high-dimensional Laplacian defined in the 1940’s by Eckmann, that generalizes the connections between the topological properties of these complexes and their respective random walks: 
    ``\emph{the asymptotic behavior of the process reflects the existence of a nontrivial $(d-1)$-homology, and its rate of convergence is dictated by the $(d-1)$-dimensional spectral gap}'', \cite{ori}. But due to the fact that they did not have a Hodge decomposition theorem to their Laplacian for infinite complexes, their best results are given for finite complexes.

    In \cite{eidi2023irreducibility}, Eidi and Mukherjee also analyzed stochastic processes on simplicial complexes relating them to the existence of high-dimensional homology and related the irreducibility of these Markov chains on these complexes with their orientability.

    In this paper, based on Parzanchevski's approach, we generalize those notions in a different direction. Instead of studying such relations for simplicial complexes as a generalization of a ``\emph{high-dimensional graph}'' we 
    introduce
        a stochastic process whose asymptotic behavior is related to the cohomology of \emph{path-complexes} on directed graphs, defined in \cite{grig} and \cite{mayer}; which is an cohomology theory for \textit{digraphs} (directed graphs) that is related not only to its vertices and edges, but to its arbitrarily long paths, allowing one to make a precise definition of a high-dimensional hole in a graph, for example. Our approach is valid to infinite digraphs, imposing mild topological and geometrical restrictions. Indeed, the Hodge decomposition theorem is valid to our Laplacians provided that the digraph is locally compact and has its Ricci curvature bounded below ($\Delta d(\cdot,x_0)\geq -C$). 

    In Section \ref{secpath} we generalize the aforementioned path cohomology to locally finite graphs and define notions of adjacency of paths or the graph and of orientability of such paths.
    
    In section \ref{seclaplace} we construct {high-dimensional} Laplace operators, study its corresponding kernels (i.e., harmonic forms) and relate its spectrum to the existence of high homologies.

    In Section \ref{sechodge} we use the Heat Equation and the Heat Kernel to prove the Hodge Decomposition Theorem for this cohomology. An intuitive (and naive) idea arises from statistical mechanics: if the digraph $G$ is the set of possible states to a particle and the initial condition $u_0:G\to\mathbb{R}$ defines the ``\textit{number of particles}'' or the ``\textit{energy}'' of each state at time zero and $u_t$ is its evolution on time $t$ (as the heat dissipates), then this converges to an harmonic form $u_\infty$ that is the ``\textit{thermodynamic equilibrium}''. This equilibrium is the only harmonic form in the path cohomology class of the initial condition $u_0$.

    %\textbf{Question 1:} Is there an invariant measure to the semigroup associated to the solution of the Heat Equation?

    %{\color{blue} mudar} In Section \ref{secuperdown} we define a notion of orientation on our complex similar to the orientation of simplicial complexes and also define a notion of valence for paths on the digraph; these notions will be crucial to define our process. Furthermore, we split the Laplacian as the sum of the \textit{upper} and the \textit{lower} Laplacians and then show that the spectrum of the upper Laplacian is bounded.

    %In Section \ref{secprocess} we finally present our stochastic process, the $p$-Lazy Random Walk on locally finite digraphs and its relation to the spectrum of our Laplacian  and the existence of high-dimensional homology, also proving a spectral gap. Just as in the simplicial complex cases of \cite{ori} and \cite{eidi2023irreducibility}, the $p$-laziness factor appears to prevent periodicity; similarly to the (bipartite) graph case. 

    In Section~\ref{secprocess}, we finally introduce our stochastic process, the $p$-Lazy Random Walk on locally finite digraphs. We investigate its relationship to the spectrum of our Laplacian and the existence of high-dimensional cohomology, also proving the existence of a spectral gap.
    Similar to the cases observed in \cite{ori} and \cite{eidi2023irreducibility}, the $p$-laziness factor appears to prevent periodicity; similarly to the (bipartite) graph case. 

In \cite{joharinad2023mathematical}, Jost and Joharinad demonstrated that the convergence properties of random walks enable the investigation of approximating a Riemannian manifold by graphs. The discrete generators of random walks on graphs and their corresponding Laplace-Beltrami operator have been utilized in Machine Learning, as evidenced by \cite{belkin2003laplacian}, \cite{roweis2000nonlinear}, \cite{tenenbaum2000global}, and \cite{van2008visualizing}. Additionally, \cite{chung2007four} employed random walks to establish Cheeger's inequality, which was further utilized in \cite{shi2000normalized} for studies in Image Segmentation. It remains an open and unexplored question whether the techniques developed herein can be applied in a similar direction.
    
    %Furthermore, our version of the Hodge decomposition theorem presented in \ref{sechodge} allows us to avoid the technical issues experienced by Otto in trying to relate the asymptotic behavior of his process with the existence of homology for infinite complexes. Here we deal with infinite digraphs, imposing mild assumptions on their topology and geometry: we ask that they have lower bounds to its {Ricci curvature} and that they are locally finite.  

    %{\Huge Existence of Homology $\uparrow$}

    %\textbf{Question 2:} If the answer of Question 1 is positive, i.e., if there is an invariant measure to our semigroup, is it related to dynamical/ergodic properties of our process?

   % We hope that this paper might supply an insightful perspective to Machine Learning and Image segmentation. To understand this expectation, in \cite{joharinad2023mathematical} Jost and Joharinad showed that the convergence properties of random walks enables one to investigate the possibility of approximating a Riemannian manifold by graphs and the discrete generators of random walks on graphs and their correspondent Laplace-Beltrami operator has been used in Machine Learning in \cite{belkin2003laplacian}, \cite{roweis2000nonlinear}, \cite{tenenbaum2000global} and \cite{van2008visualizing}. Moreover, in \cite{chung2007four}, random walks were used to prove the Cheeger's inequality that was used in \cite{shi2000normalized} to studies of Image Segmentation.

%%%%%%%%%%%%%%%%%%%%%%%%%%%%%%%%%%%%%%%%%%%%%%%%%%%%%%%%%%%%%%%%%%%%%%%%%%%%%%%%%%%%%%%%%%%%%%%%%%%%%%%%%%%%%%%%

\section{Digraph Weighted Cohomology}\label{secpath}

		A directed graph $G=(V,E)$, or a \textbf{digraph} is a graph in which the edges have a direction. And it is said to be \textbf{locally finite} if every vertex $v\in V$ has a finite number of edges incident to it.
  
  In this section we present the theory of digraph cohomology for a locally finite digraph $G=(V,E)$ with coefficients in field $\K$ based on the path (co)homology presented in \cite{grig}; in which the theory was developed to finite digraphs. %We point out that the name cohomology is due to the fact that it is dual to the homology path theory, also presented in \cite{grig} as well as in \cite{mayer}. We also emphasize  that it is done in analogy to the de Rham cohomology, for differentiable manifolds.

We call by \textbf{elementary $p$-path on $V$} a sequence $\{i_k\}_{k=0}^p$ of vertices, and denote it by $e_{i_0\cdots i_p}$ or simply by $i_0\cdots i_p$. Given a field $\K$, the space of all formal $\K$-linear combinations of elementary $p$-paths on $V$ is denoted by $\Lambda_p=\Lambda_p(V)$. Its elements are called \textbf{$p$-paths} on $V$.

Observe that the notion of elementary path does not take into account the directed structure of the graph. Indeed, if $a$ and $b$ are vertices of $G$, $e_{ab}$ is an elementary $1$-path independent of the fact that if $(a,b)$ as an edge or not. So, an elementary $p$-path $e_{i_0\cdots i_p}$ is said to be \textbf{allowed} on $G$ if $(i_k,i_{k+1})\in E$, for $k=1, \cdots,{p-1}$. The linear span of the allowed elementary $p$-paths will be denoted by $\tilde{\mathcal{A}_p}$.

%To define the path homology of digraphs, Grigor'an et al. defined the \textbf{boundary operator} by the linear extension of
 The \textbf{boundary operator} is defined as the linear extension of
\begin{equation}\label{pathboundary}
    \partial e_{i_0\cdots i_p}=\sum_{q=0}^p(-1)^qe_{i_0\cdots \widehat{i_q}\cdots i_p}.
\end{equation}

While our primary focus in this paper is on path cohomology rather than homology, it's important to note that the boundary operator plays a crucial role in defining orientation  and our main notions of adjacency.

Indeed, observe that if $(0,1)$ is an edge, $\partial e_{01}=e_1-e_0$. Of course, in this case, a notion of adjacency in the digraph must relate somehow $e_0$ to $e_1$. But observe that, this boundary also induces an orientation on the vertices, and to capture that we say that $e_1$ is a \textit{neighbor} of $-e_0$ and we say that $e_{01}$ induces a \textit{positive orientation} on $e_1$ and a \textit{negative orientation} on $e_0$. 

We generalize these notions by defining the \textbf{oriented allowed $p$-paths} as the set $G^p_\pm:=\{\pm v:v\in\tilde{\mathcal{A}_p}\}$ and say that $e_{i_0\cdots i_p}$ induces a positive (respectively, negative) orientation on $(-1)^qe_{i_0\cdots \widehat{i_q}\cdots i_p}$ if $q$ is even (resp., odd) and by asking two elementary and oriented allowed $p$-paths to be \textit{neighbors} if they figure in the linear combination that defines the boundary of a same elementary allowed $(p+1)$-path. More precisely:

\begin{definition}
Two oriented allowed elementary $(p-1)$-paths $v$ and $w$ on the digraph $G=(V,E)$ are said to be \textbf{up-neighbors}\index{Neighbor} if there is an allowed elementary $p$-path $e_{i_0\cdots i_p}$ such that there are indexes $r,s\in\{0,\cdots, p\}$ with $r\neq s$ such that
$$v=(-1)^re_{i_0\cdots\widehat{i_r}\cdots i_p}\textrm{ and }w=(-1)^se_{i_0\cdots\widehat{i_s}\cdots i_p}.$$
In this case we denote $v\uparrow w$.
\end{definition}

In this case, we say that $v$ and $w$ are subpaths of $e_{i_0\cdots i_p}$.

Equivalently, we say that any two allowed elementary $(p-1)$-paths, $v$ and $w$ are up-neighbors if there is an allowed elementary $p$-path $\tau$ such that $v$ and $-w$ are subpaths of $\tau$ with the orientations induced by it.

\

\textbf{Remark:} henceforth we will deal only with oriented elementary allowed paths, for this reason we will call them simply by allowed elementary paths, with certain abuse of language.

\

But, in higher orders, one can define to $(p+1)$-paths to be neighbors if they ``\textit{intersect}'' in a $p$-path, as follow.

\begin{definition}
    Two allowed elementary $(p+1)$-paths $v$ and $w$ on the digraph $G=(V,E)$ are said to be \textbf{down-neighbors}\index{Down Neighbor} if there is an allowed elementary $p$-path $e_{i_0\cdots i_p}$ such that there are indexes $r,s\in\{0,\cdots, p\}$ with $r\neq s$ and vertices $k,h\in V$ such that
$$v=(-1)^re_{i_0\cdots{i_{r-1}}ki_{r}\cdots i_p}\textrm{ and }w=(-1)^se_{i_0\cdots{i_{s-1}}h i_{s}\cdots i_p}.$$
In this case we denote $v\downarrow w$.
\end{definition}

%\textbf{Remark:} This notion defines a graph of elementary $p$-paths of $G$, in which the vertices are these paths and the edges are defined by this notion of adjacency. One could define a digraph of $p$-paths by introducing a notion of orientation in $G$. Indeed, if we define $G_p^{\pm}=\{\pm v: \, v\textrm{ is an allowed elementary }p-\textrm{path}\}$ we may say that $e_{i_0\cdots i_p}$ defines an positive orientation on $e_{i_0\cdots \widehat{i_r}\cdots i_p}$ if $r$ is even and an negative orientation if $r$ is odd. If for every $p$-path $v$ the orientation of $v$ induced by every $(p+1)$-path in which $v$ appears in the linear combination that defines its boundary is the same, we say that $G$ is \textbf{$p$-orientable}. Furthermore, in this case one can easily adapt our notion of neighborhood to $G_p^\pm$ considering the signs in the linear combinations defining the boundary. From this notion arises the aforementioned digraph of $p$-paths.

%\

Furthermore, we say that a sub-digraph $G'$ of $G$ is an up-$p$-component (respectively, down-$p$-component) of $G$ if for any two elementary $p$-paths $v,v'$ there exists a chain of $p$-paths $v=v_0\uparrow v_1\uparrow\cdots\uparrow v_n= v'$ (resp., $v=v_0\downarrow v_1\downarrow\cdots\downarrow v_n= v'$).

%Fix $k\in V$ and $\pm e_{i_0\cdots i_d}\in G_{d}^\pm$ we say that $k\triangleleft \pm e_{i_0\cdots i_d}$ if there is a path $w=\pm e_{i_0\cdots i_{q-1}ki_{q}\cdots \widehat{i_r}\cdots i_{d}}\in G_d^\pm$ such that $v\uparrow w$. \todo{verificar necessidade depois} 
 
If $G$ is an up/down-$p$-component itself we say that $G$ is \textbf{up/down-$p$-connected}.\index{$p$-connected}

\begin{definition}
    A \textbf{disorientation} of the allowed elementary $(p+1)$-paths of $G$ is a choice of orientation $G^{p+1}_+$ for these paths such that no pair of $(p+1)$-paths induces different orientations on any $p$-path.

    And an \textbf{orientation} of the allowed elementary $(p+1)$-paths of $G$ is a choice of orientation for these paths such that no pair of $(p+1)$-paths induces a same orientation on any $p$-path.
\end{definition}

A bipartite graph is a graph with a disorientation on its $1$-paths. And we also highlight that a digraph can admit both a disorientation and an orientation simultaneously.

%We also highlight that this notion is a generalization of neighborhood of vertices. Indeed, $e_{i_0i_1}$ is an allowed $1$-path (edge) if and only if $e_{i_0}\uparrow e_{i_1}$.

%And we say that $G$ is \textbf{$d$-regular}\index{$d$-regular} if $m_{d+1}(v)=1$ for any $v\in G_{d+1}^\pm$.

\begin{definition}
Given an elementary allowed $d$-path $v$ we define as its up and down \textbf{valences}\index{Valence} the maps
$$m_\uparrow(v)=|\{w:\pm w\uparrow v\}| \quad \textrm{and}\quad m_\downarrow(v)=|\{w:\pm w\downarrow v\}|.$$
\end{definition} 

Henceforth \emph{we restrict our analysis to digraphs of bounded valences $m$}. That is, for each $p$ there is an integer $M_p$ such that $m_\uparrow(w),m_\downarrow(w)\leq M_p$ for every elementary $p$-path $w$. 

Furthermore, we say that the \textbf{degree} of a $p$ path $v$ is the number of $p+1$-paths that contains $v$ as a subpath, and we denote it by $\operatorname{deg}(v)$. Furthermore, we say that $G$ is \textbf{$p$-uniform} if every allowed elementary $p$-path is a subpath of at least one allowed elementary $(p+1)$-path. Observe that $m_\uparrow (v)=(p+1)\operatorname{deg}(v)$.

Let us now set the ground to define our cohomology.

	\begin{definition}
		Given a integer $p \geq 0$, a \textbf{$p$-form} on $V$ is a map $f: \Lambda_{p} \to \K$ such that if $v$ is an elementary path, $f(-v)=-f(v)$.
    The $\K$-linear space of all $p$-forms is denoted by $\Lambda^p=\Lambda^p(V)$. 
	\end{definition}

We highlight that the notions of $p$-paths and of $p$-forms were introduced in \cite{grig} without the notion of orientation. But such orientation will be essential to relate our Laplace operators to interesting Markov chains, as in its definition we have an alternating sum.

\begin{example}
    For a given allowed elementary $p$-path $v$ we define its associated Dirac form as
    $$\mathbbm{1}_v(w):=\left\{\begin{array}{ll}
    1, & w=v, \\
    -1, & w=-v,\\
    0, & \textrm{otherwise.}
\end{array}\right.$$
\end{example}

In order to define a cohomological structure, we must define boundary operators.

%\begin{definition}
%			The \textbf{exterior differential} is the linear operator $d :\Lambda^{p-1}\to\Lambda^{p}$ such that, for every $f \in \Lambda^{p-1}$,
			%\begin{displaymath}
%				(d f) (i_0\cdots i_p) = \sum_{q=0}^p (-1)^{q} f(i_0\cdots \widehat{i_q} \cdots i_p)
			%\end{displaymath}
			%where $\widehat{i_q}$ means the omission of index $i_q$. At times, we will denote $d$ by $d^{(p)}$ just to indicate that, after applying $d$, a $p$-form is obtained.
		%\end{definition}

		\begin{definition}
			The \textbf{boundary operator} is the linear operator $\partial:\Lambda^{p}\to \Lambda^{p-1}$ such that 
			\begin{displaymath}
				(\partial f) (i_0\cdots i_{p-1})=\sum_{k\in V}\sum_{q=0}^{p}(-1)^qf(i_0\cdots i_{q-1} ~ k ~ i_q\cdots i_{p-1})
			\end{displaymath} 
			for every $f \in \Lambda^{p}$. In the same way, we will sometimes denote $\partial$ by $\partial_{(p-1)}$ just to indicate that, after applying $\partial$, a $(p-1)$-form is obtained.
		\end{definition}

		Consider the Hilbert space $l^2(\Lambda_p)$ of the square-integrable $p$-forms, i.e., 
		\begin{displaymath}
			l^2(\Lambda_p):=\left\{ f:\Lambda_p\to\K ~~ : ~~ \sum_{x\in \Lambda_p}f^2(x)<\infty \right\}
		\end{displaymath}
		with  the weighted inner product
		\begin{equation} \label{inner}
			\langle f,g\rangle=\sum_{w\in G^p_\pm}\omega(w)f(w)g(w);
		\end{equation}
    with $\omega:G^p_\pm\to]0,=\infty[$. 

    The standard choices to this weight are 1, $m_\uparrow^{-1}$ and $m_\downarrow^{-1}$. 

    With respect to this inner product explicit computations show that the adjoint of $\partial$ is the \textbf{exterior differential} (or the coboundary) operator given by$$d f(i_0\cdots i_p)=\frac{1}{\omega(i_0\cdots i_p)}\sum_{q=0}^p (-1)^{q}{\omega(i_0\cdots \widehat{i_q}\cdots i_p)}f(i_0\cdots \widehat{i_q}\cdots i_p);$$
in which $\widehat{i_q}$ means the omission of index $i_q$. We highlight that the valence $m$ is bounded -- so it is well defined.
		
		\begin{prop}
			If $f\in l^2(\Lambda_{p-1})$, then $df\in l^2(\Lambda_{p})$.
		\end{prop}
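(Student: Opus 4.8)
The plan is to prove the stronger statement that $d\colon l^2(\Lambda_{p-1})\to l^2(\Lambda_p)$ is a bounded operator, by estimating $\|df\|_{l^2(\Lambda_p)}$ from above by a constant multiple of $\|f\|_{l^2(\Lambda_{p-1})}$; finiteness of the right-hand side then gives $df\in l^2(\Lambda_p)$ at once. Two features make this possible: the formula defining $df$ is a sum of only $p+1$ terms, and the hypothesis that $G$ is locally finite limits how many elementary $p$-paths can share a common face.

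First, for a fixed elementary $p$-path $i_0\cdots i_p$ I would apply the Cauchy--Schwarz inequality (equivalently $(\sum_{q=0}^p a_q)^2\le (p+1)\sum_{q=0}^p a_q^2$) to the definition of $d$, obtaining
$$|df(i_0\cdots i_p)|^2\ \le\ (p+1)\sum_{q=0}^{p}\bigl|f(i_0\cdots \widehat{i_q}\cdots i_p)\bigr|^2 .$$
Summing over all elementary $p$-paths and interchanging the (nonnegative) sums yields
$$\|df\|_{l^2(\Lambda_p)}^2\ \le\ (p+1)\sum_{q=0}^{p}\ \sum_{i_0\cdots i_p}\bigl|f(i_0\cdots \widehat{i_q}\cdots i_p)\bigr|^2 .$$
For each fixed $q$ I would then reorganize the inner sum according to the value $w=i_0\cdots\widehat{i_q}\cdots i_p$ of the face obtained by deleting the $q$-th vertex: writing $N_q(w)$ for the number of elementary $p$-paths whose $q$-th face equals $w$, the inner sum becomes $\sum_{w}N_q(w)\,|f(w)|^2$. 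Recovering a $p$-path from $w$ amounts to inserting a single vertex in position $q$, and here the local finiteness of $G$ is invoked to bound the number of admissible insertions by a constant $C$ independent of $w$ (and of $q$). Consequently the inner sum is at most $C\,\|f\|_{l^2(\Lambda_{p-1})}^2$, and adding the $p+1$ contributions gives $\|df\|_{l^2(\Lambda_p)}^2\le (p+1)^2C\,\|f\|_{l^2(\Lambda_{p-1})}^2<\infty$.

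The main obstacle is this last step: one must be scrupulous about exactly which paths the $l^2$-sums range over, and then verify that local finiteness genuinely furnishes a bound on $N_q(w)$ that is uniform enough in $w$ and $q$ for the argument to close — this is the one place where the hypothesis on $G$ is used, and a careless treatment of the fiber count is precisely where the proof could break down. The first two steps, by contrast, are purely formal manipulations of finite sums and present no difficulty.
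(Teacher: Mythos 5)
Your proof founders at precisely the step you flagged as the main obstacle, and the obstacle is not a matter of carelessness: it cannot be overcome. The elements of $\Lambda_{p}$ are \emph{all} sequences $i_0\cdots i_p$ of $p+1$ vertices; no incidence with the edge set $E$ is imposed at this stage (allowedness only enters later, in the definition of $\mathcal{A}^p$ and $\Omega^p$). Hence recovering an elementary $p$-path from its $q$-th face $w$ amounts to inserting an \emph{arbitrary} vertex of $V$ at position $q$, so $N_q(w)=|V|$, which is infinite for an infinite digraph; local finiteness is simply irrelevant here. Your final bound would read $\norm{df}^2\le (p+1)^2\,|V|\,\norm{f}^2$, which says nothing. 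Even if one reinterprets all sums as running over allowed paths only, local finiteness makes each fiber $N_q(w)$ finite but gives no bound uniform in $w$; without uniformity, $\sum_w N_q(w)|f(w)|^2$ may diverge while $\sum_w |f(w)|^2$ converges (take $N_q(w_n)=n$ and $|f(w_n)|^2=n^{-3/2}$). A uniform fiber bound requires bounded degree, a hypothesis the paper only introduces in Section 5.

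In fact the proposition, read literally, is false, which is why no estimate of this kind can close. Take $p=1$, $V=\mathbb{N}$, $f(n)=1/n$: then $f\in l^2(\Lambda_0)$ but $\norm{df}^2=\sum_{m,n}(1/m-1/n)^2=\infty$, since already $\sum_{m\ge 2n}(1/n-1/m)^2\ge \sum_{m\ge 2n}(2n)^{-2}$ diverges for each fixed $n$. For what it is worth, the paper's own argument is different from yours: it expands $\norm{df}^2$ as a double sum over the two deletion indices $q,r$ and asserts that the off-diagonal terms cancel, leaving $\norm{f}^2$. But that cancellation does not occur (every cross term carries the sign $(-1)^{q+r}$, so the $(q,r)$ and $(r,q)$ terms reinforce rather than cancel; for $p=1$ they add up to $-2f(i_0)f(i_1)$), and the diagonal part equals $(p+1)\,|V|\,\norm{f}^2$, not $\norm{f}^2$. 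Your suspicion about the fiber count was exactly right: the statement needs to be reformulated (e.g., over allowed paths in a digraph of uniformly bounded degree) before either your argument or the paper's can be repaired.
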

	\begin{proof}
	Observe that
	$$
	\norm{df}^2 = \sum_{i_0,\cdots,i_p\in V}df(i_0\cdots i_p)^2 $$
    $$= \sum_{i_0,\cdots,i_p\in V}\left(\frac{1}{\omega(i_0\cdots i_p)}\sum_{q=0}^p {(-1)^{q}}{\omega(i_0\cdots \widehat{i_q}\cdots i_p)}f(i_0\cdots \widehat{i_q}\cdots i_p)\right)^2$$
	$$= \sum_{i_0,\cdots,i_p\in V}\sum_{q=0}^p\sum_{r=0}^p\frac{(-1)^{q+r}f(i_0\cdots \widehat{i_q}\cdots i_p)f(i_0\cdots \widehat{i_r}\cdots i_p)\omega(i_0\cdots \widehat{i_q}\cdots i_p)\omega(i_0\cdots \widehat{i_r}\cdots i_p)}{\omega(i_0\cdots i_p)^{2}}$$
    $$=\sum_{i_0,\cdots,i_p\in V}\frac{1}{\omega(i_0\cdots i_p)^2}\left(\sum_{q=0}^p\left({f(i_0\cdots \widehat{i_q}\cdots i_p)^2}{\omega(i_0\cdots \widehat{i_q}\cdots i_p)^2} +\right.\right.$$
	$$+ \sum_{r<p}(-1)^{q+r}{f(i_0\cdots \widehat{i_r}\cdots i_q\cdots i_p)f(i_0\cdots i_r\cdots \widehat{i_q}\cdots i_p)}{\omega(i_0\cdots\widehat{i_r}\cdots i_p)\omega(i_0\cdots\widehat{i_q}\cdots i_p)} +$$
	$$\left.\left.+ \sum_{r>p}(-1)^{q+r-1}{f(i_0\cdots \widehat{i_q}\cdots i_r\cdots i_p)f(i_0\cdots i_q\cdots \widehat{i_r}\cdots i_p)}{\omega(i_0\cdots\widehat{i_r}\cdots i_p)\omega(i_0\cdots\widehat{i_q}\cdots i_p)}\right)\right).$$
	
    After changing the indices in latter sum, the last two sums cancel each other. Then
			\begin{displaymath}
				\norm{df}^2=\sum_{i_0,\cdots,i_p\in V}\omega(i_0\cdots i_p)^{-2}\sum_{q=0}^p{f(i_0\cdots \widehat{i_q}\cdots i_p)^2}{\omega(i_0\cdots\widehat{i_q}\cdots i_p)}=\norm{f}^2<\infty.
			\end{displaymath}
		\end{proof}

			A form $f$ is called \textbf{closed} if $f \in \ker d$, and \textbf{exact} if $f \in \im d$.

The following results states that $d$ and $\partial$ are fit to define (co)homological chains.
  
		\begin{lemma}
			$d^2=0$.\label{d2}
		\end{lemma}
		\begin{proof}
		 Of course, this holds to $p=0$ and $p=1$. For $p\geq 1$, note that:
			$$
				(d^2 f) (i_0\cdots i_p) = \frac{1}{\omega(i_0\cdots i_p)}\sum_{q=0}^p (-1)^{q}\omega(i_0\cdots \widehat{i_q}\cdots i_p)df(i_0\cdots \widehat{i_q}\cdots i_p)$$
				$$= \frac{1}{\omega(i_0\cdots i_p)}\sum_{q=0}^p (-1)^{q}\left(\sum_{r=0}^{q-1}(-1)^r\omega(i_0\cdots \widehat{i_r}\cdots \widehat{i_q}\cdots i_p) f(i_0\cdots \widehat{i_r}\cdots \widehat{i_q}\cdots i_p) +\right.$$
    $$\left.\sum_{r=q+1}^{p}(-1)^{r-1}\omega(i_0\cdots \widehat{i_q}\cdots \widehat{i_r}\cdots i_p)f(i_0\cdots \widehat{i_q}\cdots \widehat{i_r}\cdots i_p)\right)$$
			As $(-1)^q(-1)^{r-1}=(-1)^{q+r-1}=-(-1)^{q+r}$, we have that the RHS equals to
   $$\frac{1}{\omega(i_0\cdots i_p)}\left(\sum_{0\leq r< q\leq p}(-1)^{q+r}\omega(i_0\cdots \widehat{i_r}\cdots \widehat{i_q}\cdots i_p)f(i_0\cdots \widehat{i_r}\cdots \widehat{i_q}\cdots i_p)-\right.$$
    $$\left.\sum_{0\leq q< r\leq p}(-1)^{q+r}\omega(i_0\cdots \widehat{i_q}\cdots \widehat{i_r}\cdots i_p)f(i_0\cdots \widehat{i_q}\cdots \widehat{i_r}\cdots i_p)\right).$$
    
	By switching $q$ and $r$, the sums cancel each other out. Then follows $d^2f(i_0\cdots i_p)=0$. 
		\end{proof}

 % Next proposition shows that the differential $d$ is the coboundary.

		%\begin{prop}
		%	The operators $d$ and $\partial$ are adjoint with respect to the inner product \eqref{inner}.
		%\end{prop}
		%\begin{proof}
		%	Given $f \in \Lambda^{p-1}$ and $g \in \Lambda^{p}$, with $p>0$,
		%	\begin{align*}
   	%			\langle df,g\rangle & = \sum_{i_0\cdots i_p\in \Lambda_p}df(i_0\cdots i_p)g(i_0\cdots i_p)\\
   	%			& = \sum_{i_0\cdots i_p\in \Lambda_p}\sum_{q=0}^p (-1)^{q}f(i_0\cdots \widehat{i_q}\cdots i_p)g(i_0\cdots i_p)\\ 
		%		& = \sum_{i_0\cdots i_{p-1}\in\Lambda_{p-1}}\sum_{q=0}^p\sum_{k\in V} (-1)^qf(i_0\cdots i_{p-1})g(i_0\cdots i_{q-1}ki_q\cdots i_{p-1})\\
		%		& = \sum_{i_0\cdots i_{p-1}\in \Lambda_p}f(i_0\cdots i_{p-1})\partial g(i_0\cdots i_{p-1}) \\
		%		& =\langle f,\partial g\rangle.
		%	\end{align*}
		%\end{proof}
		
	Since $d$ is the adjoint of $\partial$, we have the following result.

		\begin{corolario}
			$\partial^2\equiv 0$.
		\end{corolario}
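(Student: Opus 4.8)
The corollary $\partial^2 \equiv 0$ follows formally from the two facts just established: Lemma~\ref{d2} says $d^2=0$, and the preceding proposition says that $d$ and $\partial$ are mutually adjoint with respect to the fixed inner product on the relevant $l^2$ spaces. The plan is therefore to transport the identity $d^2=0$ across the adjunction.

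First I would fix $p\geq 0$ and consider $\partial^2$ as a map $\Lambda^{p+1}\to\Lambda^{p-1}$ (restricted to the square-integrable forms where the inner product is available). For arbitrary $f\in l^2(\Lambda_{p-1})$ and $g\in l^2(\Lambda_{p+1})$, I would compute $\langle f,\partial^2 g\rangle$ by applying the adjointness proposition twice: $\langle f,\partial^2 g\rangle=\langle f,\partial(\partial g)\rangle=\langle df,\partial g\rangle=\langle d(df),g\rangle=\langle d^2 f,g\rangle$. By Lemma~\ref{d2}, $d^2 f=0$, so $\langle f,\partial^2 g\rangle=0$ for every such $f$.

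Since this holds for all $f$ in a dense (indeed spanning) subset of $l^2(\Lambda_{p-1})$ — the elementary $(p-1)$-paths themselves give an orthonormal family and testing against each of them reads off a coordinate of $\partial^2 g$ — it follows that $\partial^2 g=0$ for every $g$, hence $\partial^2\equiv 0$. Alternatively, and more in keeping with the purely formal nature of the statement, one can simply invoke the general principle that the adjoint of the zero operator is the zero operator: $(\,\cdot\,)^{*}$ is linear and contravariant, so $\partial^2=(d^*)^2=(d^2)^*=0^*=0$.

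The only point requiring a word of care is that $d$ and $\partial$ are here a priori defined on the full dual spaces $\Lambda^p$, whereas the adjointness was proved only on the $l^2$ subspaces; so strictly the conclusion $\partial^2\equiv 0$ on all of $\Lambda^{p+1}$ should instead be checked directly from the combinatorial definition of $\partial$ (an index-cancellation argument identical in spirit to the proof of Lemma~\ref{d2}), with the adjointness argument serving as the conceptual explanation. I do not expect any genuine obstacle here — the computation is a two-line application of results already in hand — but I would make sure the quantifier "for all $f$" genuinely ranges over enough forms to conclude $\partial^2 g=0$ pointwise.
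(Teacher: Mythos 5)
Your argument is correct and is exactly the one the paper intends: the corollary is stated immediately after the adjointness proposition precisely so that $\langle f,\partial^2 g\rangle=\langle d^2f,g\rangle=0$ for all $f$, together with nondegeneracy of the inner product (testing against elementary paths), yields $\partial^2\equiv 0$. Your closing caveat about the domain on which adjointness is available is a reasonable extra precaution, but it does not change the substance; the approach matches the paper's.
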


		%An elementary $p$-path $i_0\cdots i_p$ is said to be \textbf{allowed} on $G$ if $(i_k,i_{k+1})\in E$, for $k=1, \cdots,{p-1}$. Otherwise, it is said to be \textbf{not allowed}.
		
		%\
		
		%\begin{definition}
			%In this paper, a digraph is said to be \textbf{connected} if there is an (direct or undirected) path  between any pair of vertices.
		%\end{definition}
		
		 A $p$-form is said to be \textbf{allowed} if it is null on every $\K$-linear combination of not allowed elementary $p$-paths. For a integer $p \geq 0$, we denote $\mathcal{A}^p(V)$ the subspace of $l^2(\Lambda_p)$ of allowed $p$-forms, i. e.,
		\begin{displaymath}
			\mathcal{A}^p(V):=\{f\in l^2(\Lambda_p): f\textrm{ is allowed}\}.
		\end{displaymath}

    \

\textbf{Notation:} Based on our notion of adjacency between $p$-paths we set
$$\sum_{v\uparrow (i_0\cdots i_p)}f(v):=\sum_{k\in V}\sum^p_{q=0}\sum_{r=0}^{q-1}(-1)^{q+r}{f(i_0\cdots \widehat{i_r}\cdots i_{q-1}ki_{q}\cdots i_{p})}+$$
$$+\sum_{k\in V}\sum^p_{q=0}\sum_{r=q}^{p}(-1)^{q+r+1}{f(i_0\cdots i_{q-1}ki_{q}\cdots \widehat{i_r}\cdots i_{p})}$$
and
$$\sum_{v\downarrow(i_0\cdots i_p)}f(v):=\sum_{q=0}^p\sum_{k\in V}\sum_{\substack{r=0 \\ r\neq q}}^{p}(-1)^{q+r}    f(i_0\cdots i_{r-1}k\cdots \widehat{i_q}\cdots i_p).$$

\

		Note that $f$ being allowed does not mean that $df$ or $\partial f$ is also allowed. We will consider the space $\Omega^p(V)$ in which we select only the allowed p-forms $f$ such that $df$ and $\partial f$ are also allowed, i. e.,
		\begin{displaymath}
			\Omega^p(V):=\{f\in \mathcal{A}^p: df\in \mathcal{A}^{p+1}\textrm{ and }\partial f\in\mathcal{A}^{p-1}\}.
		\end{displaymath}
		Therefore, have the chain complexes
		\begin{equation} \label{complex}
   			0 ~ \xrightarrow{\makebox[1cm]{~}} ~  \Omega^0 ~  \xrightarrow{\makebox[1cm]{\scriptsize$d^{(0)}$}} ~ \cdots ~ \xrightarrow{\makebox[1cm]{\scriptsize$d^{(p-2)}$}} ~ \Omega^{p-1} ~ \xrightarrow{\makebox[1cm]{\scriptsize$d^{(p-1)}$}} ~ \Omega^{p} ~ \xrightarrow{\makebox[1cm]{\scriptsize$d^{(0)}$}} ~ \cdots 
		\end{equation}
		and
		\begin{equation}
			0 ~ \xleftarrow{\makebox[1cm]{~}} ~ \Omega^0 ~ \xleftarrow{\makebox[1cm]{\scriptsize$\partial_{(0)}$}} ~ \cdots ~ \xleftarrow{\makebox[1cm]{\scriptsize$\partial_{(p-2)}$}} ~ \Omega^{p-1} ~ \xleftarrow{\makebox[1cm]{\scriptsize$\partial_{(p-1)}$}} ~ \Omega^{p} \xleftarrow{\makebox[1cm]{\scriptsize$\partial_{(p)}$}} ~ \cdots    
		\end{equation}
		
		\

		\begin{definition}
			Over the chain complex \eqref{complex} we define the \textbf{path cohomologies} $H_n$ by
			\begin{displaymath}
				H_n:=\frac{\ker d^{(n)}}{\im d^{(n-1)}}.
			\end{displaymath}
			Which is well defined by Lemma \ref{d2}.
		\end{definition} 

%%%%%%%%%%%%%%%%%%%%%%%%%%%%%%%%%%%%%%%%%%%%%%%%%%%%%%%%%%%%%%%%%%%%%%%%%%%%%%%%%%%%

\section{The Discrete Laplacian}\label{seclaplace}

The \textbf{Laplace operator} or the \textbf{Laplacian} is an elliptic operator that plays a fundamental role in analysis, geometry and stochastic calculus. In this section we generalize the usual Laplacian defined for graphs, whose diffusion process is the random walk. To, in the later sections of this work, relate it with the existence of homologies in the subjacent digraph to stochastic processes whose transitions depend on this operator.

%In the de Rham Theory one can define, over the spaces of forms, the Hodge Laplacian  by
%$$\Delta=d\delta+\delta d;$$
%in which $d$ is the exterior differential and $\delta$ is the \textit{codifferential}. This Laplacian is globally defined and it is strongly related to the manifold topology. Our goal in this section is to define a Laplacian over digraphs, which is a generalization of the classic discrete Laplacian and present some basic results.

\begin{definition}
The \textbf{Laplacian} (or \textbf{Laplace operator}) is the linear operator over $\Omega^p$ given by
$$\Delta:=d\partial+\partial d.$$
\end{definition}

Observe that from the fact that $d$ is adjoint to $\partial$ we conclude that $\Delta$ is self-adjoint.

%Next example shows that this is in fact a generalization to the Laplacian usually defined on (di)graphs, see \cite{weber}.

%\begin{example}
%Let $G=(V,E)$ be a locally finite digraph and $f\in\Omega^0$; that is, its domain is a subset of the vertices set $V$. Then,
%$$\Delta f(x)=\partial d f(x)=\sum_{k\uparrow x}(d f(kx)-df(xk))=\sum_{k\uparrow x}(f(x)-f(k)-f(k)+f(x))=2\sum_{k\uparrow x}(f(x)-f(k)).$$

%Therefore, 
%\begin{equation}
%\Delta f(x)=2(m_\uparrow(x)f(x)-\sum_{y \uparrow x}f(y)).\label{laplaciano}
%\end{equation} 
%\end{example}

We now characterize the kernel of this operator to further associate it to the existence of cohomology.

\begin{prop}\label{propkernel}
Let $f$ be a $p$-form. Then, $\Delta f=0$ iff $df=0$ and $\partial f=0$.\label{dd}
\end{prop}
\begin{proof}
Clearly $df=0$ and $\partial f=0$ imply $\Delta \varphi=0$. Now,
$$\langle \Delta f, f \rangle=\langle (d\partial+\partial d)f ,f \rangle = \langle df, df \rangle + \langle \partial f, \partial f \rangle=\norm{df}^2+\norm{\partial f}^2.$$
This way $\Delta f=0$ means that $df=0$ and $\partial f=0$.
\end{proof}

\begin{definition}
A $p$-form $f$ is said to be \textbf{harmonic} if $\Delta f=0.$
And we set the \textbf{space of harmonic $p$-forms} as
$$\mathcal{H}^p:=\{f\in\Omega^p: \Delta f=0\}$$
\end{definition}

\begin{lemma}
$f\in \mathcal{H}^p$ iff $f\in(\im d)^{\perp}\cap\ker d.$
\end{lemma}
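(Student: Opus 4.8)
The plan is to read this off directly from Proposition \ref{dd} together with the adjointness of $d$ and $\partial$. By Proposition \ref{dd}, the condition $\Delta f = 0$ is equivalent to the conjunction $df = 0$ and $\partial f = 0$. The first condition is literally $f \in \ker d$, so the whole statement reduces to showing that, for $f \in \Omega^p$, the condition $\partial f = 0$ is equivalent to $f \in (\im d)^{\perp}$.

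For the forward implication, suppose $f \in H^p$, so that $df = 0$ and $\partial f = 0$. Then for every $g \in \Omega^{p-1}$ the adjointness proposition gives $\langle f, dg \rangle = \langle \partial f, g \rangle = 0$; since every element of $\im d$ is of the form $dg$, this says $f \perp \im d$. Combined with $df = 0$, we get $f \in (\im d)^{\perp} \cap \ker d$. For the converse, suppose $f \in (\im d)^{\perp} \cap \ker d$. Then $df = 0$ is immediate, and for every test form $g$ we have $\langle \partial f, g \rangle = \langle f, dg \rangle = 0$ because $dg \in \im d$ and $f \perp \im d$; hence $\partial f = 0$. Thus $df = 0$ and $\partial f = 0$, so $\Delta f = 0$ by Proposition \ref{dd}, i.e. $f \in H^p$.

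The one point that needs care is the last step of the converse: to conclude $\partial f = 0$ from $\langle \partial f, g \rangle = 0$ for all admissible $g$, the family of test forms must be total in the ambient Hilbert space, so I would make sure to apply the identity $\langle \partial f, g\rangle = \langle f, dg\rangle$ to a sufficiently rich family (e.g. all elementary $(p-1)$-paths, or all of $l^2(\Lambda_{p-1})$), and to be consistent about whether $(\im d)^{\perp}$ is understood inside $\Omega^p$ or inside $l^2(\Lambda_p)$. Apart from that bookkeeping, the lemma is an immediate consequence of results already proved, so there is no substantial obstacle.
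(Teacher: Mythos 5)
Your proof is correct and follows essentially the same route as the paper: reduce to Proposition \ref{dd} and use the adjointness $\langle f, dg\rangle = \langle \partial f, g\rangle$ in both directions. The totality concern you raise at the end is real but is dispatched exactly the way the paper does it --- in the converse direction take the single test form $g = \partial f$, which gives $\langle \partial f, \partial f\rangle = \langle f, d\partial f\rangle = 0$, i.e.\ $\norm{\partial f}^2 = 0$, with no need for a dense or total family.
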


\begin{proof}
Suppose That $f \in (\im d)^{\perp}\cap\ker d$. Then $df=0$ and
$$\norm{\partial f}^2=\langle \partial f,\partial f \rangle=\langle f, d\partial f \rangle=0,$$
because $f \in (\im d)^{\perp}$. Then, by Proposition \ref{dd}, $\Delta f=0$.

Now suppose that $\Delta f =0$. Then, again, by Proposition \ref{dd}, $f\in \ker d\cap\ker\partial$. This way
$$\langle f,d g\rangle= \langle \partial f, g\rangle=0.$$
So $f\in(\im d)^{\perp}\cap\ker d.$ 
\end{proof}

\subsection{Up and Down Laplacians and The Spectral Gap}

In this section we split the Laplacian defined above into two differential operators, whose spectrum is related to the topology of the subjacent digraph.

We define then the \textbf{up}\index{Up Laplacian} and the \textbf{down Laplacian}\index{Down Laplacian} respectively by:
$$\Delta^+=\partial d$$
$$\Delta^-=d\partial$$

It is clear that all facts proven to the Laplacian trough this paper holds for the normalized Laplacian $\Delta:=\Delta^++\Delta^-.$

Next proposition establishes explicit formulas for these operators.

\begin{prop}
For any elementary $d$-path $i_0\cdots i_d$ in $G$, and for every $p$-form $f$,
$$\Delta^+ f(i_0\cdots i_p)=f(i_0\cdots i_p)-\sum_{v\uparrow (i_0\cdots i_p)}\omega(v)f(v)$$
and
$$\Delta^- f(i_0\cdots i_p)=\omega(i_0\cdots i_p)^{-1}\sum_{v\downarrow(i_0\cdots i_p)}\omega(v)f(v)$$
\label{laplacianos}
\end{prop}
\begin{proof}
Explicit calculations give:
$$\Delta^+ f(i_0\cdots i_d)=\partial d f(i_0\cdots i_p)=\sum_{k\in V}\sum^p_{q=0}(-1)^qdf(i_0\cdots i_{q-1}ki_{q}\cdots i_{p})=$$
$$f(i_0\cdots i_d)+\sum_{k\in V}\sum^p_{q=0}\sum_{r=0}^{q-1}(-1)^{q+r}{f(i_0\cdots \widehat{i_r}\cdots i_{q-1}ki_{q}\cdots i_{p})}{\omega(i_0\cdots \widehat{i_r}\cdots i_{q-1}ki_{q}\cdots i_{p})}+$$
$$+\sum_{k\in V}\sum^p_{q=0}\sum_{r=q}^{p}(-1)^{q+r+1}{f(i_0\cdots i_{q-1}ki_{q}\cdots \widehat{i_r}\cdots i_{p})}{\omega(i_0\cdots i_{q-1}ki_{q}\cdots \widehat{i_r}\cdots i_{p})}=$$
$$f(i_0\cdots i_p)-\sum_{v\uparrow (i_0\cdots i_d)}\omega(v){f(v)}$$
and
$$\Delta^- f(i_0\cdots i_p)=d\partial f(i_0\cdots i_p)=\frac{1}{\omega(i_0\cdots i_p)}\sum_{q=0}^p(-1)^q \omega(i_0\cdots \widehat{i_q}\cdots i_p)\partial f(i_0\cdots \widehat{i_q}\cdots i_p)=$$
$$\frac{1}{\omega(i_0\cdots i_p)}\sum_{q=0}^p\sum_{k\in V}\sum_{\substack{r=0 \\ r\neq q}}^{p}(-1)^{q+r}\omega(i_0\cdots \widehat{i_q}\cdots i_p)    f(i_0\cdots i_{r-1}k\cdots \widehat{i_q}\cdots i_p).$$
\end{proof}

So we have the following spectral proposition.

\begin{prop}
If $1$ is an upper bound to the weight $\omega$, then the spectrum of $\Delta^+$ is contained in $[0,M+1]$ and the zero is achieved exactly on the closed forms -- in which $M$ is an upper bound to $m_\uparrow$.  \label{spec}
\end{prop}
\begin{proof}
Assume that $\lambda f(v)=(\Delta^+f)(v)$. As $f\in l^2$, $|f(w)|$ is limited and, therefore, achieves a maximum at a point $u$. 

By Proposition \ref{laplacianos}, 
$$\lambda f (u)=(\Delta^+ f)(u)=f(u)-\sum_{v\uparrow u}\omega(v){f(v)}.$$
Thus,
$$|\lambda f(u)|\leq |f(u)|+\sum_{v\uparrow u}|f(v)|\leq (M+1)|f(u)|,$$
since $m_\uparrow\leq M$ and $\omega\leq 1$.

\end{proof}

%{\Huge \color{blue} Ver se ao 2.7(4) do Ori encaixa aqui}

As a generalization of \cite[Proposition 2.7]{ori}. Next proposition guarantees us sufficient conditions to $M+1$ be achieved as the maximum eigenvalue, given us the \textit{spectral gap} in sense of the difference between the moduli of the two largest eigenvalue of $\Delta^+$.

\begin{prop}
    Fix the weight $\omega=\deg^{-1}$. Suppose that $G$ is up-$(p-1)$-connected and $p$-uniform, then the spectrum of $\Delta^+_p$ lies in $[0,p+1]$ with $0$ being achieved in the closed forms and $p+1$ being achieved iff the digraph is disoriantable, and is achieved on the boundaries of disorientations. 
\end{prop}

\begin{proof}

    The fact that $\operatorname{Sepec}\Delta^+\subset [0,p+1]$ is analogous to the previous proposition.

Now assume that $G$ is up-$(p-1)$-connected and that $G^p_+$ is a disorientation. Define
$$F(\tau)=\left\{\begin{array}{ll}
    1 & \tau \in G^p_+  \\
    -1 & \tau\in G^p_\pm\setminus G^p_+
\end{array}\right.$$%\todo{definir $G^p_\pm$}
And $f=\partial F$. Thus,
$$f(i_{0\cdots i_{p-1}})=\sum_{k\in V}\sum_{q=0}^{p}(-1)^qF(i_0\cdots i_{q-1} ~ k ~ i_q\cdots i_{p-1})=\deg(i_0\cdots i_{p-1})F(\tau),$$
with $\tau$ being any $p$-path containing $v$ as a subpath.

Moreover, if $w\uparrow v$ there is a $\tau$ such that it contains $v$ as a subpath and $-\tau$ contains $w$ as a subpath, so that
$$\frac{f(v)}{\deg(v)}+\frac{f(w)}{\deg(w)}=F(\tau)+F(-\tau)=0.$$

Thus,
$$\Delta^+f(v)=f(v)-\sum_{w\uparrow v}\frac{f(w)}{\deg(w)}=f(v)-\sum_{w\uparrow v}-\frac{f(v)}{\deg(v)}=f(v)-\sum_{w\uparrow v}F(\tau)=$$
$$=(p+1)f(v).$$

So $f$ is an eigenvector of $\Delta^+$ associated to $p+1$.

On the other hand, assume that $\Delta^+f=(p+1)f$ for some $f\neq 0$. Fix $v$ that maximizes $\frac{|f(v)|}{\deg(v)}$ and normalize $f$ so that $\frac{|f(v)|}{\deg(v)}=1$ and define
$$F=\frac{d f}{p+1},$$
which imposes an disorientation
$$G^p_+=\{\tau\in G^d_\pm : F(\tau)>0\},$$
as we shall see.

Indeed, $f=\frac{\Delta^+f}{p+1}=\frac{\partial d f}{p+1}=\partial F$. Since, by the definition of $\Delta^+$,
$$\deg (w)=|f(w)|=\frac{1}{p}\left|\sum_{v\uparrow w}\frac{f(v)}{\deg(v)}\right|\leq\sum_{v\uparrow w}\frac{|f(v)|}{\deg(v)}\leq \frac{1}{d}\sum_{v\uparrow w}1=\deg(w),$$
so $\frac{|f(v)|}{\deg(v)}=1$ for every $v\uparrow w$ and, consequently, since $G$ is $(p-1)$-connected, $\frac{|f(\cdot)|}{\deg(\cdot)}\equiv 1$. Again from the definition of $\Delta^+$,
$$\frac{f(v)}{\deg(v)}=-\frac{1}{\deg(v)\cdot p}\sum_{w\uparrow v}\frac{f(w)}{\deg(w)}.$$   
Since the r.h.s. an average over terms whose absolute value is that of the l.h.s this gives $\frac{f(v)}{\deg(v)}=\frac{f(w)}{\deg(w)}$ for every $w\uparrow v$. Thus, if $\tau=e_{i_0\cdots i_p}$, WLOG,
$F(\tau)=\frac{1}{p+1}\sum_{q=0}^p\frac{(-1)^qF(e_{i_0\cdots\widehat{i_q}\cdots i_p})}{\deg(e_{i_0\cdots\widehat{i_q}\cdots i_p})}=\frac{f(e_{i_1\cdots i_p})}{\deg(e_{i_1\cdots i_p})}$
which has absolute value constant equals 1. Furthermore, if $\tau,\tau'\in G^p_\pm$ intersect in a $(p-1)$-path $e_{i_0\cdots i_p}$, and induce opposite orientation on it, there are vertices $k,k'\in V$ such that $\tau=(-1)^re_{i_0\cdots i_{r_1}ki_{r}\cdots i_p}$ and $\tau'=(-1)^se_{i_0\cdots i_{s_1}k'i_{s}\cdots i_p}$ with $(-1)^s=-(-1)^r$. So
$$F(\tau)=F((-1)^re_{i_0\cdots i_{r_1}ki_{r}\cdots i_p})=\frac{F(e_{i_0\cdots i_p})}{\deg(e_{i_0\cdots i_p})}=F(-\tau')=-F(\tau')$$

\end{proof}

Following the line of reasoning of Proposition \ref{propkernel}, one shows that $\ker \Delta^+=\ker d$. Since the space of exact closed forms contains the exact forms, these later are called \textbf{trivial zeros} in the spectrum of $\Delta^+$. So, the existence of \textbf{non-trivial} zeros in the spectrum of $\Delta^+$, i.e. closed forms that are not exact, is guaranteed if and only if the digraph has non-trivial homology. So the next definition takes place.

\begin{definition}
    The \textbf{spectral gap} of a locally finite digraph $G$ is 
    $$\lambda(G)=\lambda=\min\operatorname{Spec}(\Delta^+|_{\ker d})=\min\operatorname{Spec}(\Delta|_{\ker d}).$$

    And the \textbf{essential gap} of $G$ is defined by
    $$\tilde{\lambda}(G)=\Tilde{\lambda}=\min\operatorname{Spec}(\Delta^+|_{\im d})=\min\operatorname{Spec}(\Delta|_{\im d}).$$
\end{definition}

The spectral gap is then a quantity that tells us the ``\textit{triviality}'' of the homology of $G$. And $\tilde{\lambda}\neq \lambda$ only if the homology is trivial.

  %%%%%%%%%%%%%%%%%%%%%%%%%%%%%%%%%%%%%%%%%%%%%%%%%%%%%%%%%%%%%%%%%%%%%%%%%%%%%%

\section{The Hodge Theorem}\label{sechodge}

Our goal in this section is to prove Hodge Decomposition Theorem for this cohomology, which plays a fundamental role in many areas of mathematics by decomposing the space of forms as a direct sum of the kernel of the given Laplace operator and its image.

\begin{teorema}[Hodge Theorem]
 Let $G$ be a locally finite and connected digraph such that there are $x_0\in V$ and $C\geq 0$ with $\Delta d(\cdot,x_0)\geq -C$. Then every path cohomology class has a unique representative that minimizes the norm. This is called the harmonic representative.\label{Hodge}
\end{teorema}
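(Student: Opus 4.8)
The plan is to turn the statement into a Hilbert-space fact and to isolate the one analytic input --- closedness of $\im d^{p-1}$ --- for which the growth hypothesis on $\Delta d(\cdot,x_0)$ is really needed. First I would record that each $\Omega^p$ is a \emph{closed} subspace of $l^2(\Lambda_p)$: evaluation at a fixed elementary path is a bounded functional, so $\mathcal A^p$ is closed, and since $d$ and $\partial$ are bounded the extra conditions $df\in\mathcal A^{p+1}$, $\partial f\in\mathcal A^{p-1}$ cut out closed subspaces as well; hence $\Omega^p$ is a Hilbert space, $d^{p-1}\colon\Omega^{p-1}\to\Omega^p$ is bounded, $\ker d^p$ is closed, and $\overline{\im d^{p-1}}\subseteq\ker d^p$ by Lemma~\ref{d2}. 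A path cohomology class is a coset $f+\im d^{p-1}$ with $f\in\ker d^p$. The vector of least norm in the closed affine set $f+\overline{\im d^{p-1}}$ always exists and is the unique element there orthogonal to $\overline{\im d^{p-1}}$, namely $g=f-P_{\overline{\im d^{p-1}}}f$; as $g$ also lies in $\ker d^p$, it belongs to $(\im d)^\perp\cap\ker d$ and so is harmonic by the lemma characterizing $H^p$. By uniqueness of the projection, $g$ is the \emph{only} possible norm-minimizing representative of $[f]$, and strict convexity of the Hilbert norm gives uniqueness automatically; the one thing that can fail is that $g$ need not lie in the coset $f+\im d^{p-1}$ itself. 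It does so precisely when $P_{\overline{\im d^{p-1}}}f\in\im d^{p-1}$, and letting $f$ range over $\overline{\im d^{p-1}}$ shows this holds for every class if and only if $\im d^{p-1}$ is closed.

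So the theorem is equivalent to: $\im d^{p-1}$ is closed for every $p$. Granting this at all levels --- which by the closed-range theorem also makes each $\im\partial_{p+1}$ closed --- one obtains the orthogonal Hodge splitting $\Omega^p=\im d^{p-1}\oplus H^p\oplus\im\partial_{p+1}$, hence $\ker d^p=\im d^{p-1}\oplus H^p$, and the harmonic representative of $[f]$ is just $P_{H^p}f$, which differs from $f$ by an element of the now-closed space $\im d^{p-1}$. Closedness of $\im d^{p-1}$ is in turn equivalent to a discrete Poincaré-type inequality $\norm h\le c\,\norm{dh}$ for $h$ orthogonal to $\ker d^{p-1}$ in $\Omega^{p-1}$, and equivalently to $0$ not being an accumulation point of the spectrum of the bounded self-adjoint operator $\Delta$ on $\Omega^p$ (it may persist as the isolated eigenvalue with eigenspace $H^p$).

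The remaining --- and genuinely hard --- step is to deduce this closedness from the hypothesis, where $\rho:=d(\cdot,x_0)$ is the tool. The function $\rho$ is $1$-Lipschitz along edges, and $\Delta\rho\ge -C$ bounds the outward branching of $G$ along the level sets of $\rho$, so $\rho$ behaves like a proper exhaustion function of a complete space. I would run the finite-dimensional Hodge decomposition on the truncations $\{\rho\le R\}$ to get representatives $g_R$ of $[f]$ with $\norm{g_R}\le\norm f$, and then upgrade the weak convergence of $(g_R)$ to strong convergence by estimating the commutators of $d$ and $\partial$ with cut-offs $\chi_R$ built from $\rho$; the bound $\Delta\rho\ge -C$ is exactly what controls these commutator terms and prevents energy from leaking to infinity, which produces the Poincaré inequality above. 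The main obstacle is precisely this no-leakage estimate, together with verifying that the strong limit of the $g_R$ still represents the original class and not a smaller one; once that is in place, the reduction of the first two paragraphs closes the argument.
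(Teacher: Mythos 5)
Your first two paragraphs are a clean and correct piece of Hilbert-space bookkeeping: the norm-minimizer of the coset $f+\im d^{p-1}$, if it exists, must be $f-P_{\overline{\im d^{p-1}}}f$, this element is harmonic by the lemma characterizing $H^p$, and existence for every class is equivalent to $\im d^{p-1}$ being closed in $\Omega^p$ (equivalently, to a Poincar\'e inequality, equivalently to $0$ not being an accumulation point of the spectrum of $\Delta$ off the harmonic space). This reduction is a legitimate and arguably sharper framing than the paper's. However, the proof stops exactly where the theorem begins: the entire analytic content of the statement is the deduction of that closed-range/spectral-gap property from the hypothesis $\Delta d(\cdot,x_0)\geq -C$, and your third paragraph only announces a program for it (``I would run the finite-dimensional Hodge decomposition on the truncations\dots'', ``the main obstacle is precisely this no-leakage estimate''). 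No commutator estimate is written down, no Poincar\'e constant is produced, and you yourself flag the no-leakage bound and the identification of the limiting class as open. As it stands this is a correct reduction plus a research plan, not a proof.

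For comparison, the paper routes everything through the heat semigroup: the hypothesis $\Delta d(\cdot,x_0)\geq -C$ is used (via the cited result of Weber, Theorem \ref{6.0.1}) to produce a unique fundamental solution $p(t,x,y)$ and hence a self-adjoint semigroup $T_t$; compactness of $T_t$ then yields a largest eigenvalue $e^{-\lambda_1 t}<1$ on the orthogonal complement of the harmonic forms, the Green's operator $G(f)=\int_0^\infty (T_tf-Hf)\,dt$ converges, and $f=H(f)+\Delta G(f)$ gives the harmonic representative directly. In that argument the spectral gap --- precisely the fact your approach needs --- is extracted from compactness of $T_t$ rather than from cut-off estimates. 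If you want to complete your route, the most economical fix is to import that same semigroup machinery to establish the gap, at which point your reduction does close the argument; the exhaustion-and-commutator approach you sketch would need substantially more work and is not obviously available at the stated level of generality.
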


We will deal with this theorem in analogy to what is done in \cite[Chatper 8]{arapura} to the Hodge Theorem on manifolds. It follows -- as will be shown -- from the following theorem; which will be proved in the next section.

\begin{teorema} There are linear operators $\mathcal{H}$ (\textbf{harmonic projection}) and $\mathcal{G}$ (\textbf{Green's operator}) on $\Lambda^p$, for all $p$, that are characterized by the following properties:
\begin{enumerate}
\item $\mathcal{H}(\alpha)$ is harmonic;
\item $\mathcal{G}(\alpha)$ is orthogonal to the space of harmonic forms;
\item $\alpha=\mathcal{H}(\alpha)+\Delta \mathcal{G}(\alpha)$.
\end{enumerate}\label{Hodge2}
\end{teorema}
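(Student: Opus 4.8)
The plan is to deduce this from a Hodge-type orthogonal decomposition of the Hilbert space. Since $d$ and $\partial$ are mutually adjoint (shown earlier) and $\Delta = d\partial + \partial d$ is self-adjoint with $\langle \Delta f, f\rangle = \|df\|^2 + \|\partial f\|^2$, the operator $\Delta$ is a non-negative self-adjoint operator. On a finite digraph the spaces $\Omega^p$ are finite-dimensional and everything is elementary linear algebra; the real content is the infinite-dimensional case, where one must argue that $\Delta$ has closed range. Assuming that — which is exactly what the hypothesis of Theorem \ref{Hodge} ($\Delta d(\cdot,x_0)\geq -C$) is designed to give, and which is presumably established in the next section — I would proceed as follows.

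First I would establish the orthogonal decomposition
\[
\Lambda^p = \mathcal H^p \oplus \im\Delta,
\]
where $\mathcal H^p = \ker\Delta = H^p$. The inclusion $\mathcal H^p \perp \im\Delta$ is immediate from self-adjointness: if $\Delta f = 0$ then $\langle f,\Delta g\rangle = \langle \Delta f,g\rangle = 0$. For the reverse, $(\im\Delta)^\perp = \ker\Delta^* = \ker\Delta = \mathcal H^p$ together with closedness of $\im\Delta$ gives $\overline{\im\Delta} = \mathcal H^{p\perp}$, hence the direct sum. Then I would define $H$ to be the orthogonal projection of $\Lambda^p$ onto $\mathcal H^p$; property (1) holds by construction. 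For $\alpha \in \Lambda^p$, the component $\alpha - H(\alpha)$ lies in $\im\Delta$, so there is $\beta$ with $\Delta\beta = \alpha - H(\alpha)$; decomposing $\beta = H(\beta) + \beta'$ with $\beta' \in \mathcal H^{p\perp}$ and noting $\Delta H(\beta) = 0$, we may take $\beta' $ as the preimage, and it is unique in $\mathcal H^{p\perp}$ because $\Delta$ restricted to $\mathcal H^{p\perp}$ is injective (its kernel is $\mathcal H^p$). Define $G(\alpha) := \beta'$. Then $G(\alpha) \in \mathcal H^{p\perp}$ is property (2), and $\alpha = H(\alpha) + \Delta G(\alpha)$ is property (3). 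Linearity of $H$ and $G$ follows from uniqueness of the decomposition.

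The main obstacle is the functional-analytic point that $\im\Delta$ is closed, equivalently that $\Delta$ is bounded below on $\mathcal H^{p\perp}$; without this one only gets a dense-range statement and $G$ need not be everywhere defined or bounded. This is precisely where the curvature-type hypothesis on $\Delta d(\cdot,x_0)$ enters — it should yield a spectral gap (or at least that $0$ is isolated in the spectrum, or that $\Delta$ is essentially self-adjoint with the relevant resolvent estimates), forcing $\im\Delta$ to be closed. I would isolate this as the key lemma to be proved in the next section and, once granted, the rest is the routine Hilbert-space argument above. Finally, the characterization "$H$ and $G$ are the unique operators with properties (1)–(3)" follows by observing that any such pair must agree with the projection/partial-inverse just constructed: applying an arbitrary $\tilde H,\tilde G$ satisfying (1)–(3) to $\alpha$ and subtracting gives $(\tilde H - H)(\alpha) = \Delta(G - \tilde G)(\alpha)$ with left side harmonic and right side in $\im\Delta \perp \mathcal H^p$, hence both zero, and then $\Delta(G-\tilde G)(\alpha) = 0$ with $(G-\tilde G)(\alpha)\in\mathcal H^{p\perp}$ forces $(G-\tilde G)(\alpha)=0$.
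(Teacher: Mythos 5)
Your plan is the standard ``static'' Hodge argument: decompose the Hilbert space as $\ker\Delta\oplus\im\Delta$, let $H$ be the orthogonal projection onto $\ker\Delta$, and let $G$ be the partial inverse of $\Delta$ on $(\ker\Delta)^{\perp}$. Modulo its one assumption, that part is correct routine Hilbert-space theory, including your uniqueness argument. But the one assumption --- that $\im\Delta$ is closed, equivalently that $\Delta$ is bounded below on $(\ker\Delta)^{\perp}$ --- is the entire content of the theorem in the infinite-dimensional setting, and you do not supply it. Moreover, the source you point to for it is not the right one: the hypothesis $\Delta d(\cdot,x_0)\geq -C$ does not by itself produce a spectral gap at $0$ (think of the line graph $\mathbb{Z}$, where $0$ lies in the essential spectrum of the Laplacian and the range is not closed even though that curvature-type bound holds trivially). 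In the paper that hypothesis is used only to guarantee existence, uniqueness and stochastic completeness of the heat kernel $p(t,x,y)$; it is not where the gap comes from.

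The paper's route is genuinely different and is built precisely to manufacture the missing ingredient. It constructs the heat semigroup $T_t u_0=\sum_y p(t,x,y)u_0(y)$, shows $\|T_t\alpha\|$ is nonincreasing, that $T_t$ is self-adjoint and satisfies the semigroup law, and --- the key analytic step --- that $T_t$ is a \emph{compact} operator because its kernel $p(t,\cdot,\cdot)$ lies in $\ell^2\otimes\ell^2$ and is therefore a norm-limit of finite-rank operators. Then $H(\alpha):=\lim_{t\to\infty}T_t\alpha$ exists (a Cauchy-sequence argument using monotonicity of $\|T_t\alpha\|$ and self-adjointness) and is harmonic, and compactness plus the multiplicativity $\lambda(t+s)=\lambda(t)\lambda(s)$ of the top eigenvalue of $T_t$ on $\{f:Hf=0\}$ forces $\lambda(t)=e^{-\lambda_1 t}$ with $\lambda_1>0$, i.e.\ exponential decay of $T_t-H$. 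That decay is what makes $G(f):=\int_0^{\infty}(T_tf-Hf)\,dt$ converge, and then $\Delta G(f)=-\int_0^{\infty}\tfrac{\partial}{\partial t}T_tf\,dt=f-Hf$ gives property (3), with orthogonality to harmonic forms following from self-adjointness of $T_t-H$. So where you assume closed range as a black box, the paper derives the equivalent spectral-gap statement from compactness of the heat semigroup; to complete your version you would need to prove that compactness (or some substitute such as discreteness of the spectrum of $\Delta$) rather than attribute the gap to the curvature hypothesis.
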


\begin{corolario}
There is an orthogonal direct sum
$$\Lambda^p=\Delta(\Lambda^p)\oplus \mathcal{H}^p= d\partial(\Lambda^p)\oplus \partial d(\Lambda^p)\oplus \mathcal{H}^p=d(\Lambda^p)\oplus \partial(\Lambda^p)\oplus \mathcal{H}^p.$$
\end{corolario}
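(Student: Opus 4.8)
The plan is to derive everything from Theorem \ref{Hodge2}, together with the identities $d^2=0$, $\partial^2=0$, the adjointness of $d$ and $\partial$, the self-adjointness of $\Delta$, and Proposition \ref{dd} (a harmonic form is both closed and coclosed). Throughout, $d(\Lambda^p)$ and $\partial(\Lambda^p)$ are read as the images of $d$ and of $\partial$ that land inside $\Lambda^p$, i.e. $d(\Lambda^{p-1})$ and $\partial(\Lambda^{p+1})$.

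First I would establish $\Lambda^p=\Delta(\Lambda^p)\oplus H^p$. Property (3) of Theorem \ref{Hodge2} gives $\alpha=H(\alpha)+\Delta G(\alpha)$ for every $\alpha$, hence $\Lambda^p=H^p+\Delta(\Lambda^p)$. For orthogonality, if $f\in H^p$ and $\beta\in\Lambda^p$ then $\langle f,\Delta\beta\rangle=\langle\Delta f,\beta\rangle=0$ since $\Delta$ is self-adjoint and $\Delta f=0$; thus $H^p\perp\Delta(\Lambda^p)$, the sum is automatically direct, and it is an orthogonal direct sum (and, incidentally, $\Delta(\Lambda^p)=(H^p)^\perp$ is closed).

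Next I would refine $\Delta(\Lambda^p)$. Since $\Delta=d\partial+\partial d$, we have $\Delta(\Lambda^p)\subseteq d\partial(\Lambda^p)+\partial d(\Lambda^p)$, so from the previous step $\Lambda^p=d\partial(\Lambda^p)+\partial d(\Lambda^p)+H^p$. These three summands are pairwise orthogonal: for $\alpha,\beta\in\Lambda^p$, adjointness and $\partial^2=0$ give $\langle d\partial\alpha,\partial d\beta\rangle=\langle\partial\alpha,\partial^2 d\beta\rangle=0$; and for $f\in H^p$, Proposition \ref{dd} gives $df=0=\partial f$, so $\langle f,d\partial\alpha\rangle=\langle\partial f,\partial\alpha\rangle=0$ and $\langle f,\partial d\alpha\rangle=\langle df,d\alpha\rangle=0$. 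Pairwise orthogonality forces the sum to be direct, yielding $\Lambda^p=d\partial(\Lambda^p)\oplus\partial d(\Lambda^p)\oplus H^p$; comparing with the first decomposition and using uniqueness of the orthogonal complement of $H^p$ also gives $\Delta(\Lambda^p)=d\partial(\Lambda^p)\oplus\partial d(\Lambda^p)$.

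Finally I would identify $d\partial(\Lambda^p)=d(\Lambda^{p-1})$ and $\partial d(\Lambda^p)=\partial(\Lambda^{p+1})$. The inclusions $d\partial(\Lambda^p)\subseteq d(\Lambda^{p-1})$ and $\partial d(\Lambda^p)\subseteq\partial(\Lambda^{p+1})$ are immediate. Conversely, for $\eta\in\Lambda^{p-1}$ write $\eta=H\eta+\Delta G\eta$; since $H\eta$ is harmonic it is closed, so $d\eta=d\Delta G\eta=d(d\partial+\partial d)G\eta=d\partial(dG\eta)$ with $dG\eta\in\Lambda^p$, whence $d(\Lambda^{p-1})\subseteq d\partial(\Lambda^p)$. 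Symmetrically, for $\omega\in\Lambda^{p+1}$, $\omega=H\omega+\Delta G\omega$ with $H\omega$ coclosed, so $\partial\omega=\partial\Delta G\omega=\partial d(\partial G\omega)$ with $\partial G\omega\in\Lambda^p$, giving $\partial(\Lambda^{p+1})\subseteq\partial d(\Lambda^p)$. Substituting these two equalities into the decomposition of the previous paragraph produces $\Lambda^p=d(\Lambda^{p-1})\oplus\partial(\Lambda^{p+1})\oplus H^p$, completing the chain. The computations are routine orthogonality bookkeeping; the only point that needs attention is that this last identification genuinely uses the operators $H,G$ of Theorem \ref{Hodge2} in the adjacent degrees $p\pm1$, not merely in degree $p$, so one must be sure Theorem \ref{Hodge2} is available in all degrees (which it is).
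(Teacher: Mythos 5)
Your proof is correct and is exactly the argument the paper intends: the paper states this corollary without proof, and your orthogonality computations (e.g. $\langle d\partial\alpha,\partial d\beta\rangle=\langle\partial\alpha,\partial^2 d\beta\rangle=0$ and the use of Proposition \ref{dd} for harmonic forms) are the same ones the paper itself deploys immediately afterwards in deriving Theorem \ref{Hodge}. Your closing remark that the identifications $d\partial(\Lambda^p)=d(\Lambda^{p-1})$ and $\partial d(\Lambda^p)=\partial(\Lambda^{p+1})$ require Theorem \ref{Hodge2} in degrees $p\pm 1$ is a worthwhile point the paper glosses over.
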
 

We now prove the Theorem \ref{Hodge} as a consequence of Theorem \ref{Hodge2}.

\begin{proof}
Let $f$ be an exact form. By Theorem \ref{Hodge2} it can be written as
$$f=\mathcal{H}(f)+\Delta \mathcal{G}(f)= \mathcal{H}(f)+d\partial \mathcal{G}(f)+\partial d \mathcal{G}(f).$$
Since $\mathcal{H}(f)$ is harmonic,
$$\langle \partial d \mathcal{G}(f), \mathcal{H}(f)\rangle=\langle d \mathcal{G}(f), d\mathcal{H}(f)\rangle=0. $$
Furthermore,
$$\langle  \partial d \mathcal{G}(f),  d\partial \mathcal{G}(f) \rangle=\langle   d \mathcal{G}(f),  d^2\partial \mathcal{G}(f) \rangle=0.$$
Then
$$\norm{\partial d \mathcal{G}(f)}=\langle \partial d \mathcal{G}(f),f\rangle=\langle d \mathcal{G}(f),df\rangle =0.$$
Thus $f=\mathcal{H}(f)+d\partial \mathcal{G}(f)$ is cohomologous to the harmonic form $\mathcal{H}(f)$.

To prove the uniqueness property, let $f_1$ and $f_2$ be two harmonic forms in the same cohomology class that differ by the exact form $dg$, i.e., $f_1-f_2=dg$. Then
$$\langle dg, f_1-f_2 \rangle= \langle g, \D f_1-\D f_2\rangle=0,$$
given the fact that harmonic forms are in $\ker\D$. Therefore $d g=0$ and $f_1=f_2$.
\end{proof}

\subsection{The Heat Equation}

In this section we will prove the Theorem \ref{Hodge2}. The pictorial idea behind this proof is that a given initial temperature, given by a form $u_0$, should spread through the digraph ``uniformly''; i.e., to $u_0$ should converge to a harmonic form. To do so we shall solve the Cauchy Problem (CP):
$$\left\{\begin{array}{ll}
    \frac{\partial }{\partial t}u+\Delta u & = \quad 0  \\
    u(0,x) & =\quad u_0(x) 
\end{array}\right.$$
on $[0,T[\times V$ with initial condition $u_0\in \Omega^p$.

A map $p:]0,\infty[\times V^{p+1}\times V^{p+1}\to\mathbb{R}$ is said to be fundamental solution to the heat equation,
$$ \frac{\partial }{\partial t}u+\Delta u  =  0 ,$$
if for any bounded initial condition $u_0\in \Omega^p$, the function
$$u(t,x)=\sum_{y\in V}p(t,x,y)u_0(y),\qquad t>0,x\in V$$
is differentiable in $t$, satisfies the heat equation and if for any allowed path $x$,
$$\lim\limits_{t\to 0^+}u(t,x)=u_0(x).$$

We shall, for now on, consider the following generalization of the usual metric on connected digraphs to $p$-connected digraphs. That is, $d(x,x)=0$ and, if $x\neq y$, there is a finite number of $p$-paths $x=x_0\uparrow x_1\uparrow \cdots\uparrow x_k=y\in G^p_\pm$ which connect $x$ and $y$. Then, $d(x,y)$ is the smallest number $k$ of such $p$-paths.

Finally, the following theorem takes place as generalization of \cite[Corollary 4.17]{weber}.

\begin{teorema}\label{6.0.1}
Let $G=(V,E)$ be a locally finite and $p$-connected digraph such that there are $x_0\in V$ and $C\geq 0$ with $\Delta d(\cdot,x_0)\geq -C$. Then the following holds true:
\begin{enumerate}
\item There is a unique fundamental solution $p:]0,\infty[\times V^{p+1}\times V^{p+1}\to\mathbb{R}$ of the heat equation.
\item $G$ is stochastically complete, i.e.
$$\sum_{y\in V}p(t,x,y)=1$$
for any $t>0$ and any allowed path $x$.
\item For every $u_0\in l^1$ and the corresponding bounded solution $u$ of (CP) we have
$$\sum_{x\in V}u(t,x)=\sum_{x\in V}u_0(x)$$
for any $t>0$.
\end{enumerate}
\end{teorema}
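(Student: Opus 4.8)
\textbf{Proof proposal for Theorem~\ref{6.0.1}.}

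The plan is to adapt, almost verbatim, the argument of Weber (\cite{weber}) for $0$-forms, keeping track of the (harmless) index bookkeeping needed to pass to $p$-forms. The essential point is that the Laplacian $\Delta$ acting on $\Omega^p$, written out coordinatewise via \eqref{laplaciano}-type formulas, is again a bounded-from-below, self-adjoint operator on the Hilbert space $l^2(\Lambda_p)$ with the same kind of locality and positivity structure as the scalar discrete Laplacian; the hypothesis $\Delta d(\cdot,x_0)\geq -C$ is exactly the growth condition that, in Weber's treatment, guarantees that $G$ is a space on which the minimal (Friedrichs) heat semigroup is \emph{conservative}. So I would (i) set up $\Delta$ as a self-adjoint operator on $l^2(\Lambda_p)$ with core the finitely supported allowed forms, invoking Proposition~\ref{dd} and the self-adjointness already proved; (ii) form the semigroup $e^{-t\Delta}$ by the spectral theorem, and define $p(t,x,y):=\langle e^{-t\Delta}\delta_y,\delta_x\rangle$ where $\delta_x$ ranges over the basis of elementary allowed $(p{+}1)$-paths; (iii) check that $u(t,x)=\sum_y p(t,x,y)u_0(y)$ solves (CP) with the stated initial condition, using dominated convergence and the local finiteness of $G$ to differentiate under the sum.

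For \emph{uniqueness} of the fundamental solution I would reproduce the standard maximum-principle / $L^2$-energy argument: if $u$ solves the heat equation with bounded initial data and $u(0,\cdot)=0$, then $\frac{d}{dt}\norm{u(t,\cdot)}^2=-2\langle\Delta u,u\rangle=-2(\norm{du}^2+\norm{\partial u}^2)\leq 0$, and together with the lower bound on $\Delta$ (which controls the behaviour at infinity and lets one justify the integration by parts for the not-a~priori-$l^2$ bounded solution, exactly as in Weber) this forces $u\equiv 0$. This is the step that genuinely uses the hypothesis $\Delta d(\cdot,x_0)\geq -C$: one uses $d(\cdot,x_0)$, or rather a suitable bounded modification of it, as a Lyapunov/cutoff function to localize the energy estimate, which is why the growth condition cannot be dropped.

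\emph{Stochastic completeness} $\sum_y p(t,x,y)=1$ is then Weber's criterion applied in this setting: the constant function $\mathbf 1$ on allowed $(p{+}1)$-paths is formally $\Delta$-harmonic (each stencil in the Laplacian has coefficients summing to zero), so $v(t,x):=1-\sum_y p(t,x,y)$ is a bounded solution of the heat equation with zero initial data; by the uniqueness just proved, $v\equiv 0$. Finally, the conservation of total mass $\sum_x u(t,x)=\sum_x u_0(x)$ for $u_0\in l^1$ follows by Fubini from stochastic completeness of the \emph{adjoint} walk — here $\Delta$ being self-adjoint makes $p(t,x,y)=p(t,y,x)$, so $\sum_x p(t,x,y)=\sum_x p(t,y,x)=1$ — and then $\sum_x u(t,x)=\sum_x\sum_y p(t,x,y)u_0(y)=\sum_y u_0(y)\sum_x p(t,x,y)=\sum_y u_0(y)$, the interchange being justified by $u_0\in l^1$ and $0\le p\le 1$. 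The main obstacle is purely expository: one must verify that the sign conventions and the factor in \eqref{laplaciano} make $\Delta\ge 0$ (so that $e^{-t\Delta}$ is a contraction and the probabilistic interpretation is valid), and that the not-allowed directions do not interfere — i.e.\ that $\Delta$ genuinely preserves $\Omega^p$ and restricts to a self-adjoint operator there — before Weber's scalar proof can be quoted line by line.
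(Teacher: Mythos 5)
Your route is the same as the paper's: the paper gives no proof of Theorem~\ref{6.0.1} at all --- it cites \cite{weber} for the $0$-form case and asserts that the argument carries over ``with minor modifications'' --- so your steps (i)--(iii), the $L^2$-energy uniqueness argument using $d(\cdot,x_0)$ as a Lyapunov/cutoff function, and the reduction of stochastic completeness to uniqueness of bounded solutions are exactly the intended adaptation. For the first bullet (existence and uniqueness of the fundamental solution) your sketch is sound and already more explicit than what the paper offers.

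There is, however, a genuine gap in your treatment of the second and third bullets, and it is precisely the point that ``minor modifications'' hides. Your stochastic-completeness argument rests on the claim that the constant function $\mathbf{1}$ on elementary $(p{+}1)$-paths is formally $\Delta$-harmonic because ``each stencil in the Laplacian has coefficients summing to zero''. That is true only for $p=0$, where $\Delta f(x)=2\bigl(m(x)f(x)-\sum_{y\sim x}f(y)\bigr)$ visibly kills constants. For $p\geq 1$ the operator $\Delta=d\partial+\partial d$ does not annihilate constants: already for $1$-forms, $d\mathbf{1}(i_0i_1i_2)=1-1+1=1\neq 0$, so $\partial d\,\mathbf{1}\neq 0$ in general; the alternating signs in $d$ and $\partial$ prevent the stencil coefficients from summing to zero. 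Relatedly, $e^{-t\Delta}$ is not positivity-preserving on $p$-forms for $p\geq 1$, so the bound $0\leq p(t,x,y)\leq 1$ that you invoke to justify Fubini in the third bullet is not available either. Consequently the step ``$v(t,x):=1-\sum_y p(t,x,y)$ is a bounded solution of the heat equation with zero initial data'' fails for $p\geq 1$, because $\mathbf{1}$ is not a stationary solution. Your argument proves the first bullet; the conservativity statements need either a restriction to $p=0$ or a genuinely different argument (for instance, exploiting $\partial$-closedness of $\mathbf{1}$ and parity, or reformulating the sum with signs), and you should flag this rather than quote the scalar proof line by line.
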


\begin{proof}
				First, note that a bounded solution $u$ of (CP) is uniquely determined by $u_0$. In fact, consider $M_1 = \sup \{ |u(t, x)| : t \in (0, T), x \in V \}$ and $M_2 = \sup \{ |u_0(x)| : x \in V \}$. For $r \in \mathbb{N}$, let
				\begin{displaymath}
					v(t, x) = u(t, x) - M_2 - \frac{M_1}{r} ( d(x, x_0) + Ct),
				\end{displaymath}
				and define $\overline{B_r}(x_0) := \{ x \in V : d(x, x_0) \leq r \}$ and $B_r(x_0) := \{ x \in V : d(x, x_0) < r \}$. If $(t, x) \in (\{0\} \times \overline{B_r}) \cup ([0, T) \times \partial B_r)$, then $v(t,x) \leq 0$. On $[0, T) \times B_r$, we have
				\begin{align*}
					\left( \frac{\partial}{\partial t} + \Delta \right) v(t, x) =& \frac{\partial}{\partial t} v(t, x) + \Delta v(t,x) \\
					=& \frac{\partial}{\partial t} u(t, x) -\frac{C M_1}{r} + \Delta u - M_1 \frac{\Delta d(x_0, x)}{r} \\
					= & -\frac{M_1}{r} (\Delta d(x_0, x) +C) \\
					\leq & ~ 0.
				\end{align*}
				From he maximum principle, $v(t, x) \leq 0$ on $[0, T) \times \overline{B_r}$. Then, $u(x, t) \leq M_2 + \frac{M_1}{r} ( d(x, x_0) + Ct)$. Passing it to the limit $r \to \infty$, $u(t, x) \leq M_2$ on $[0, T) \times V$. Using the same argument for $-u$, follows $|u(t, x)| \leq M_2$ for $(x, t) \in [0, T) \times V$. Now, note that if $u_1, u_2$ are bounded solutions with the same initial condition $u_0$, then $u_1 - u_2 = 0$, that is, the bounded solution is uniquely determined by the initial condition.
				
				Item 1 follows directly from this uniqueness. Moreover, that stochastic completeness is equivalent to uniqueness of bounded solutions to the heat equation \cite{Woj07}, then item 2 follows. Finally, note that by items 1 and 2,
				\begin{align*}
					\sum_{x \in V} u(t, x) =& \sum_{x \in V} \sum_{y \in V} p(t, x, y) ~u_0 (y) \\
					= & \sum_{y \in V} u_0 (y) \sum_{x \in V} p(t, x, y) \\
					= & \sum_{y \in V} u_0 (y)
				\end{align*}
				and then item 3 follows.		
			\end{proof}

We will then divide the proof of \ref{Hodge2} in the following propositions.

\begin{prop}\label{7.3.1}
If $u(t,x)$ is a general solution of the heat equation, then $\norm{u(t,x)}^2$ is (nonstrictly) decreasing.
\end{prop}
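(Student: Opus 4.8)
The plan is to differentiate the squared norm $\norm{u(t,\cdot)}^2 = \langle u(t,\cdot), u(t,\cdot)\rangle$ with respect to $t$ and use the heat equation together with the self-adjointness and non-negativity of $\Delta$. First I would note that since $u$ solves the Cauchy problem, $\frac{\partial}{\partial t} u = -\Delta u$, so formally
$$\frac{d}{dt}\norm{u(t,\cdot)}^2 = 2\left\langle \frac{\partial}{\partial t}u, u\right\rangle = -2\langle \Delta u, u\rangle.$$
Then, using that $\Delta = d\partial + \partial d$ is self-adjoint and the computation already recorded in the proof of Proposition \ref{dd}, we have $\langle \Delta u, u\rangle = \norm{du}^2 + \norm{\partial u}^2 \geq 0$. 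Hence $\frac{d}{dt}\norm{u(t,\cdot)}^2 = -2(\norm{du}^2 + \norm{\partial u}^2) \leq 0$, which gives the (nonstrict) monotonicity. I would then conclude that $\norm{u(t,\cdot)}^2$ is nonincreasing in $t$.

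The key steps, in order, are: (1) justify that $\norm{u(t,\cdot)}^2$ is finite and differentiable in $t$ — this follows from $u$ being an $l^2$-valued solution built from the fundamental solution as in Theorem \ref{6.0.1}, with the stochastic completeness and $l^1$-conservation controlling the relevant sums; (2) interchange the $t$-derivative with the (infinite) sum defining the inner product, which is the step requiring care in the locally finite / infinite setting; (3) substitute $\partial_t u = -\Delta u$ and invoke self-adjointness of $\Delta$ to write $\langle \Delta u, u\rangle = \langle du, du\rangle + \langle \partial u, \partial u\rangle$; (4) observe this is non-negative, so the derivative is $\leq 0$.

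The main obstacle I expect is step (2): rigorously justifying the differentiation under the infinite sum, i.e., showing that $\frac{d}{dt}\sum_{x} u(t,x)^2 = \sum_x \frac{\partial}{\partial t} u(t,x)^2$. In the finite-digraph case this is automatic, but here one needs a dominated-convergence or uniform-convergence argument on the series and its termwise $t$-derivatives, using the decay properties of the fundamental solution $p(t,x,y)$ and the bound $\Delta d(\cdot,x_0) \geq -C$ from the hypothesis of Theorem \ref{6.0.1}. Everything after that interchange is a short formal computation relying only on results already established in the excerpt (self-adjointness of $\Delta$ and the identity from Proposition \ref{dd}).
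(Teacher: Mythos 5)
Your proof is correct and follows essentially the same route as the paper: differentiate $\norm{u(t,\cdot)}^2$, substitute $\partial_t u = -\Delta u$, and use the adjointness of $d$ and $\partial$ to get $\langle \Delta u, u\rangle = \norm{du}^2 + \norm{\partial u}^2 \geq 0$. Your remarks on justifying the interchange of derivative and sum address a point the paper's one-line proof silently skips, so they are a welcome addition rather than a divergence.
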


\begin{proof}
$$\frac{\partial}{\partial t}\norm{u(t,x)}^2=2\langle\frac{\partial}{\partial t} u,u\rangle=-2\langle \Delta u,u\rangle= -2(\norm{du}^ 2+\norm{\partial u}^2)\leq 0.$$
\end{proof}

Let $$T_t(u_0):=\sum_{y\in V}p(t,x,y)u_0(y)$$
with $p$ as in Theorem \ref{6.0.1}. This is, as we have seen, the only solution to (CP).

\begin{prop}\label{semigrupo}
The semigroup property $T_{t_1+t_2}=T_{t_1}T_{t_2}$ holds.
\end{prop}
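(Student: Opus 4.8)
### Proof proposal for Proposition \ref{semigrupo} ($T_{t_1+t_2}=T_{t_1}T_{t_2}$)

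The plan is to exploit the \emph{uniqueness} of the bounded solution of the Cauchy problem (CP), which is guaranteed by Theorem \ref{6.0.1}, rather than to manipulate the kernel $p(t,x,y)$ directly. Fix a bounded initial condition $u_0\in\Omega^p$ and a parameter $t_2>0$. First I would define $v(t,x):=T_{t+t_2}(u_0)(x)$ and $w(t,x):=T_t\bigl(T_{t_2}(u_0)\bigr)(x)$, and show that both are bounded solutions of the heat equation $\partial_t u+\Delta u=0$ on $]0,\infty[\times V$ having the same initial datum at $t=0$, namely $T_{t_2}(u_0)$. For $w$ this is immediate from the defining property of the fundamental solution in Theorem \ref{6.0.1} (here one uses that $T_{t_2}(u_0)$ is itself bounded, which follows from stochastic completeness: $\sum_y p(t_2,x,y)=1$ gives $\norm{T_{t_2}u_0}_\infty\le\norm{u_0}_\infty$). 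For $v$, the key observation is that $t\mapsto v(t,x)=u(t+t_2,x)$ is just the original solution $u(t,x)=T_t(u_0)(x)$ shifted in time by $t_2$; since the heat equation is autonomous, $v$ again solves it, and $v(0,x)=u(t_2,x)=T_{t_2}(u_0)(x)$. By the uniqueness clause of Theorem \ref{6.0.1}, $v\equiv w$, i.e. $T_{t+t_2}(u_0)=T_t(T_{t_2}(u_0))$ for all $t>0$; taking $t=t_1$ and letting $u_0$ range over all bounded forms yields the operator identity $T_{t_1+t_2}=T_{t_1}T_{t_2}$.

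The one point that needs a small argument is the passage from ``the identity holds on all bounded $u_0$'' to ``the operators are equal'' (and, if one wants the statement as operators on $l^2(\Lambda_p)$, extending by density). Since $T_t$ is given by an honest kernel and, by Proposition \ref{7.3.1} together with $T_t u_0$ solving (CP), is a contraction on $l^2$, it extends continuously; bounded forms (e.g. finitely supported ones) are dense in $l^2(\Lambda_p)$, so agreement on that dense set forces $T_{t_1+t_2}=T_{t_1}T_{t_2}$ on all of $l^2(\Lambda_p)$. Equivalently, one may phrase the conclusion pointwise on kernels: comparing $v$ and $w$ for $u_0=\mathbbm{1}_{\{z\}}$ gives the Chapman--Kolmogorov relation
\[
p(t_1+t_2,x,z)=\sum_{y\in V}p(t_1,x,y)\,p(t_2,y,z),
\]
which is exactly the semigroup property at the level of the heat kernel.

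The main obstacle I anticipate is \emph{justifying that $v(t,x)=u(t+t_2,x)$ genuinely qualifies as ``the'' bounded solution with initial datum $T_{t_2}(u_0)$ in the sense demanded by Theorem \ref{6.0.1}} — in particular checking the initial-condition limit $\lim_{t\to0^+}v(t,x)=T_{t_2}(u_0)(x)$ for every allowed path $x$. This is not automatic from differentiability at interior times; it requires that $u(\cdot,x)$ be continuous (indeed differentiable) up to and including $t=t_2>0$, which it is since $t_2$ is an interior time of $]0,\infty[$. So the limit is really just continuity of $u$ at $t_2$, and the hypothesis $\Delta d(\cdot,x_0)\ge -C$ enters only insofar as it was already needed to invoke Theorem \ref{6.0.1} in the first place. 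Once this interior-regularity remark is in place, the uniqueness clause does all the remaining work and no further estimates are needed.
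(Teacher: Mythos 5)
Your argument is correct and is essentially the paper's own proof, made precise: the paper likewise observes that $u(t_1+t_2,x)$ is obtained by solving the heat equation with initial condition $u(t_2,x)$ and evaluating at $t_1$, relying on the uniqueness clause of Theorem \ref{6.0.1}. Your additional remarks on boundedness of $T_{t_2}u_0$, interior-time continuity, and the kernel-level Chapman--Kolmogorov identity only fill in details the paper leaves implicit.
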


\begin{proof}
It holds because  $u(t_1+t_2,x)$ can be obtained by solving the heating equation with intial condition $u(t_2,x)$ and then evaluating it at $t_1$.
\end{proof}

\begin{prop}\label{autadj}
$T_t$ is formally self-adjoint.
\end{prop}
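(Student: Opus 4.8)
The plan is to show that the kernel $p(t,x,y)$ is symmetric in $x$ and $y$, from which formal self-adjointness of $T_t$ follows immediately: for $u_0,v_0$ with suitable summability,
\[
\langle T_t u_0, v_0\rangle = \sum_{x}\sum_{y} p(t,x,y) u_0(y) v_0(x) = \sum_{y}\sum_{x} p(t,y,x) v_0(x) u_0(y) = \langle u_0, T_t v_0\rangle,
\]
where the middle equality uses $p(t,x,y)=p(t,y,x)$ and the interchange of summation is justified (for instance when $u_0,v_0\in l^1$, or by first checking the identity on finitely supported forms and passing to the limit). So the real content is the symmetry of the heat kernel.

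To get symmetry, I would argue as follows. Fix $t>0$ and consider, for $0\le s\le t$, the quantity
\[
\Phi(s) := \langle T_s \alpha, T_{t-s}\beta\rangle
\]
for finitely supported $p$-forms $\alpha,\beta$. Using that $u(s,\cdot)=T_s\alpha$ solves the heat equation, $\partial_s u = -\Delta u$, and likewise $w(t-s,\cdot)=T_{t-s}\beta$ solves it with the sign flipped in $s$, i.e. $\partial_s w = +\Delta w$, we differentiate:
\[
\Phi'(s) = \langle \partial_s u, w\rangle + \langle u, \partial_s w\rangle = -\langle \Delta u, w\rangle + \langle u, \Delta w\rangle = 0,
\]
by self-adjointness of $\Delta$ (established earlier). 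Hence $\Phi$ is constant on $[0,t]$, so $\Phi(0)=\Phi(t)$, which reads $\langle \alpha, T_t\beta\rangle = \langle T_t\alpha, \beta\rangle$; taking $\alpha,\beta$ to be indicator forms of single paths gives $p(t,y,x)=p(t,x,y)$. The semigroup property (Proposition \ref{semigrupo}) and uniqueness of solutions from Theorem \ref{6.0.1} are what make $T_s T_{t-s} = T_t$ and guarantee the objects are well defined.

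The main obstacle is analytic bookkeeping rather than conceptual: one must justify differentiating $\Phi(s)$ under the infinite sum over $V^{p+1}$, and justify that $\langle \Delta u, w\rangle = \langle u, \Delta w\rangle$ holds for these (a priori merely square-summable, possibly not finitely supported) solutions — the self-adjointness of $\Delta$ was stated for $\Omega^p$ but the interchange of the two infinite sums defining the inner products with the sum defining $\Delta$ needs the local finiteness of $G$ together with decay of $u(s,\cdot)$ and $w(t-s,\cdot)$. The cleanest route is to carry out the entire computation with $\alpha$ and $\beta$ finitely supported, where $T_s\alpha$ and $T_{t-s}\beta$ decay fast enough (this is exactly the regime controlled by the hypothesis $\Delta d(\cdot,x_0)\ge -C$ and the stochastic completeness in Theorem \ref{6.0.1}) that all interchanges are legitimate, deduce pointwise symmetry of $p$, and only then extend self-adjointness of $T_t$ to general $l^1$ (or $l^2$) forms by a density/monotone-convergence argument.
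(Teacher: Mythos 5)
Your argument is essentially the paper's: the paper differentiates $\langle T_t\alpha,T_\tau\beta\rangle$ in $t$ and in $\tau$, uses self-adjointness of $\Delta$ to see the two derivatives agree, concludes the inner product is a function of $t+\tau$, and evaluates at $(t,0)$ and $(0,t)$ — your $\Phi(s)=\langle T_s\alpha,T_{t-s}\beta\rangle$ with $\Phi'\equiv 0$ is the same computation restricted to the line $t+\tau=\mathrm{const}$. Your additional detour through symmetry of the kernel $p(t,x,y)$ and the density/interchange-of-summation caveats are sound (and more careful than the paper, which glosses over these analytic points), but they do not change the underlying approach.
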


\begin{proof} As
$$\frac{\partial}{\partial t}\langle T_t\alpha,T_\tau \beta\rangle=\langle\frac{\partial}{\partial t} T_t\alpha,T_\tau \beta\rangle=-\langle\Delta T_t\alpha,T_\tau \beta\rangle$$
$$=-\langle T_t\alpha,\Delta T_\tau \beta\rangle=\langle T_t\alpha,\frac{\partial}{\partial \tau}T_\tau \beta \rangle=\frac{\partial}{\partial \tau}\langle T_t\alpha, T_\tau \beta \rangle, $$
we can write $\langle T_t\alpha,T_\tau \beta\rangle=g(t+\tau)$. Thus,
$$\langle T_t\alpha,\beta\rangle =g(t+0)=g(0+t)=\langle \alpha, T_t \beta \rangle.$$
\end{proof}

\begin{prop}
$T_t\alpha$ converges to a harmonic form $H(\alpha)$.
\end{prop}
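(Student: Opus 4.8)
The plan is to show that the trajectory $(T_t\alpha)_{t>0}$ is Cauchy in the Hilbert space as $t\to\infty$, to take its limit as the definition of $H(\alpha)$, and then to verify harmonicity from the energy identity underlying Proposition \ref{7.3.1}. Throughout, $\alpha$ is taken in the relevant space of forms on which the semigroup $T_t$ acts.

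First I would introduce $\phi(t):=\langle\alpha,T_t\alpha\rangle$. Combining the semigroup property (Proposition \ref{semigrupo}) with formal self-adjointness (Proposition \ref{autadj}) gives $\phi(t)=\langle T_{t/2}\alpha,T_{t/2}\alpha\rangle=\norm{T_{t/2}\alpha}^2\ge 0$, so by Proposition \ref{7.3.1} the function $\phi$ is nonincreasing and bounded below; hence $\phi(t)\to L$ for some $L\ge 0$ as $t\to\infty$. Differentiating and using that $d$ and $\partial$ are adjoint (as in the proof of Proposition \ref{dd}) yields $\phi'(t)=-\langle\Delta T_{t/2}\alpha,T_{t/2}\alpha\rangle=-\bigl(\norm{dT_{t/2}\alpha}^2+\norm{\partial T_{t/2}\alpha}^2\bigr)$, and a second differentiation together with self-adjointness of $\Delta$ gives $\phi''(t)=\norm{\Delta T_{t/2}\alpha}^2\ge 0$. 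Thus $\phi$ is convex, nonincreasing, and has a finite limit, which forces $\phi'(t)\to 0$.

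Second, for $s,t>0$ the same two propositions give
$$\norm{T_s\alpha-T_t\alpha}^2=\norm{T_s\alpha}^2-2\langle T_s\alpha,T_t\alpha\rangle+\norm{T_t\alpha}^2=\phi(2s)-2\phi(s+t)+\phi(2t).$$
Letting $s,t\to\infty$, the right-hand side tends to $L-2L+L=0$, so $(T_t\alpha)$ is Cauchy and converges in $l^2(\Lambda_p)$; call the limit $H(\alpha)$. Since each $T_t\alpha$ is allowed and the allowed forms form a closed subspace, $H(\alpha)\in\mathcal{A}^p$. From $\phi'(t)\to 0$ we get $\norm{dT_{t/2}\alpha}\to 0$ and $\norm{\partial T_{t/2}\alpha}\to 0$; because $d$ is bounded (indeed $\norm{df}=\norm{f}$) and $\partial$ is likewise continuous on the space in question, passing to the limit along $T_{t/2}\alpha\to H(\alpha)$ gives $dH(\alpha)=0$ and $\partial H(\alpha)=0$, so $H(\alpha)\in\Omega^p$ and, by Proposition \ref{dd}, $\Delta H(\alpha)=0$.

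The main obstacle I anticipate is not the core argument but the functional-analytic bookkeeping around it: justifying that $T_t\alpha$ stays in the domain of $\Delta$ so that the computations of $\phi'$ and $\phi''$ (differentiation under the inner product) are legitimate, the continuity of $\partial$ on the relevant subspace of $l^2(\Lambda_p)$, and the fact that the heat semigroup of Theorem \ref{6.0.1} is genuinely defined and well-behaved on all the $l^2$ data we need (rather than only on bounded or $l^1$ initial conditions). Once those points are secured, every remaining step is a short computation.
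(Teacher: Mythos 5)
Your argument is correct, and its core — the Cauchy estimate — is the same as the paper's: both of you combine the semigroup property with formal self-adjointness to write $\langle T_s\alpha,T_t\alpha\rangle=\norm{T_{(s+t)/2}\alpha}^2$ and then use the monotone convergence of $t\mapsto\norm{T_t\alpha}^2$ (Proposition \ref{7.3.1}) to conclude that $\norm{T_s\alpha-T_t\alpha}^2=\phi(2s)-2\phi(s+t)+\phi(2t)\to 0$; your $\phi$-notation is just a cleaner packaging of the paper's identity for $\norm{T_{t+2h}\alpha-T_t\alpha}^2$. Where you genuinely diverge is the verification that the limit is harmonic. The paper passes to the limit in $T_t\alpha=T_\tau T_{t-\tau}\alpha$ to get the fixed-point identity $T_\tau H(\alpha)=H(\alpha)$, then differentiates in $\tau$ to conclude $\Delta H(\alpha)=0$; this requires continuity of each $T_\tau$ and differentiability of $\tau\mapsto T_\tau H(\alpha)$. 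You instead use convexity of $\phi$ (via $\phi''\geq 0$) to force the energy $\norm{dT_{t/2}\alpha}^2+\norm{\partial T_{t/2}\alpha}^2=-\phi'(t)\to 0$, and then pass to the limit using boundedness of $d$ and $\partial$ together with Proposition \ref{dd}. Your route buys a more quantitative statement (energy decay) and avoids differentiating the semigroup at the limit point, at the cost of needing two derivatives of $\phi$ and the continuity of $d$ and $\partial$ on the relevant subspace — a hypothesis the paper itself supplies via the isometry computation $\norm{df}=\norm{f}$. Both arguments share the same unaddressed bookkeeping about domains of $\Delta$ and differentiation under the inner product, which you rightly flag; note also that the paper's proof additionally records the self-adjointness of $H$, which you would need to add if this proposition is to be cited for that fact later.
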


\begin{proof}
We have
$$\norm{T_{t+2h}\alpha-T_t\alpha}^2=\langle T_{t+2h}\alpha-T_t\alpha,T_{t+2h}\alpha-T_t\alpha\rangle=$$ $$\norm{T_{t+2h}\alpha}^2+\norm{T_t\alpha}^2-2\langle T_{t+2h}\alpha,T_t\alpha\rangle.$$
Applying \ref{semigrupo} and then \ref{autadj}, we have
$$\langle T_{t+2h}\alpha,T_t\alpha\rangle=\langle T_{t+h}\alpha,T_{t+h}\alpha\rangle=\norm{T_{t+h}\alpha}^2.$$
Therefore,
$$\norm{T_{t+2h}\alpha-T_t\alpha}^2=(\norm{T_{t+2h}\alpha}-\norm{T_t\alpha})^2-2(\norm{T_{t+h}\alpha}^2-\norm{T_{t+2h\alpha}}\cdot\norm{T_t\alpha}).$$
By (\ref{7.3.1}), $\norm{T_t\alpha}^2$ converges. Thus, $\norm{T_{t+2h}\alpha-T_t\alpha}^2$ can be taken arbitrarily small. As $l^2$ is complete,  $T_t\alpha$ converges, in $l^2$, to a form $H(\alpha)\in l^2$. 

To prove that $\mathcal{H}(\alpha)$ is indeed harmonic, one just need to take $\tau>0$ and note that the relation $T_t\alpha=T_\tau T_{t-\tau}\alpha$ implies, taking the limit $t\to \infty$, that $T_\tau H(\alpha)=\mathcal{H}(\alpha)$. Then $T_t \mathcal{H}(\alpha)$ is constant on $t$. Therefore, $$ 0=\frac{\partial}{\partial t}T_t\mathcal{H}(\alpha)=\frac{\partial}{\partial t}\mathcal{H}(\alpha)=-\Delta \mathcal{H}(\alpha).$$ Thus, $\mathcal{H}(\alpha)$ is indeed harmonic.

Furthermore,
$$\langle \mathcal{H}\alpha,\beta \rangle=\lim\limits_{t\to\infty}\langle T_t\alpha,\beta \rangle=\lim\limits_{t\to\infty}\langle \alpha,T_t\beta \rangle=\langle\alpha, \mathcal{H} \beta\rangle.$$
Then, $H$ is self-adjoint.
\end{proof}

\begin{prop}
$T_t$ is compact for any $t$.
\end{prop}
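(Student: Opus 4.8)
The plan is to prove that $T_t$ is a Hilbert--Schmidt operator on $l^2(\Lambda_p)$ and then invoke the standard fact that Hilbert--Schmidt operators are compact. It is cleanest to establish the Hilbert--Schmidt property for $T_{t/2}$ and deduce the general case from it: a composition of two Hilbert--Schmidt operators is trace class, hence compact, so $T_t=T_{t/2}\circ T_{t/2}$ is compact; then for arbitrary $t>s>0$ the identity $T_t=T_{t-s}T_s$ together with ``bounded $\circ$ compact $=$ compact'' propagates compactness to every $t>0$. Boundedness of each $T_s$ is already contained in Proposition \ref{7.3.1}, which gives $\norm{T_s u_0}\le\norm{u_0}$.

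The first step is to read off the Hilbert--Schmidt norm from the heat kernel. For an elementary $p$-path $x\in V^{p+1}$ let $\delta_x\in l^2(\Lambda_p)$ be its indicator; the defining formula $u(t,x)=\sum_{y}p(t,x,y)u_0(y)$ gives $T_t\delta_y(x)=p(t,x,y)$, hence $\langle T_t\delta_y,\delta_x\rangle=p(t,x,y)$ and in particular $p(t,x,y)=p(t,y,x)$ by self-adjointness. Combining the semigroup property (Proposition \ref{semigrupo}) with self-adjointness (Proposition \ref{autadj}) yields $T_t=T_{t/2}^{*}T_{t/2}=T_{t/2}^{2}$, so that the on-diagonal Chapman--Kolmogorov identity
$$p(t,x,x)=\langle T_{t/2}^{2}\delta_x,\delta_x\rangle=\norm{T_{t/2}\delta_x}^{2}=\sum_{y}p\left(\tfrac t2,x,y\right)^{2}$$
holds. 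Summing over $x$,
$$\norm{T_{t/2}}_{\mathrm{HS}}^{2}=\sum_{x\in V^{p+1}}\norm{T_{t/2}\delta_x}^{2}=\sum_{x,y}p\left(\tfrac t2,x,y\right)^{2}=\sum_{x\in V^{p+1}}p(t,x,x).$$
Thus everything reduces to the single estimate $\sum_{x}p(t,x,x)<\infty$.

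To bound $\sum_x p(t,x,x)$ I would use Theorem \ref{6.0.1}: the fundamental solution is nonnegative and stochastically complete, $p(t,x,y)\ge 0$ and $\sum_y p(t,x,y)=1$ (for $p$-forms one first reduces to the scalar situation, e.g.\ via a Kato-type domination of $|p(t,\cdot,\cdot)|$ by the $0$-form heat kernel on the graph of allowed paths). From Chapman--Kolmogorov, $p(t,x,x)=\sum_y p\left(\tfrac t2,x,y\right)^2\le\big(\sup_y p\left(\tfrac t2,x,y\right)\big)\sum_y p\left(\tfrac t2,x,y\right)=\sup_y p\left(\tfrac t2,x,y\right)$, so it suffices to control $\sum_{x}\sup_y p\left(\tfrac t2,x,y\right)$; this is where an on-diagonal (ultracontractive-type) heat-kernel bound $p(s,x,x)\le\varphi(s)$ combined with the geometry of $G$ must be brought in. Once $\sum_x p(t,x,x)<\infty$ is in hand, the chain ``$T_{t/2}$ Hilbert--Schmidt $\Rightarrow$ $T_t=T_{t/2}^{2}$ trace class $\Rightarrow$ $T_t$ compact $\Rightarrow$ $T_t$ compact for all $t>0$'' finishes the proof.

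The main obstacle is precisely this last quantitative input $\sum_{x}p(t,x,x)<\infty$. The standing hypotheses ($\Delta d(\cdot,x_0)\ge -C$, local finiteness, connectedness) are what yields existence, uniqueness and stochastic completeness of the heat kernel, but they do not by themselves make the diagonal summable --- already on $\Z$ one has $\inf_x p(t,x,x)>0$, so the sum diverges and $T_t$ fails to be compact there. Compactness of $T_t$ therefore genuinely rests on an additional assumption on $G$ (a Faber--Krahn or volume-growth condition, a spectral gap for $\Delta$, or finiteness of $V$), and the argument above should be read as reducing ``$T_t$ compact'' to that one estimate; supplying it under the hypotheses actually in force is the crux of this proposition.
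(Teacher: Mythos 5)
Your strategy is in substance the same as the paper's: the paper expands the kernel $p(t,x,y)=\sum_{i,j}p_{i,j}\phi_i(x)\phi_j(y)$ in an orthonormal basis, truncates to finite-rank operators $T_n$, and controls $\norm{T_t-T_n}$ by the tail of $\sum_{i,j}|p_{i,j}|^2$ --- which is exactly the Hilbert--Schmidt norm $\sum_{x,y}p(t,x,y)^2$ that your argument isolates via the identity $\sum_{x,y}p(t/2,x,y)^2=\sum_x p(t,x,x)$. The difference is one of honesty about the crux: at the precise point where you stop and say ``this is the estimate that must be supplied,'' the paper simply writes ``As $\norm{p}^2_{l^2}=\sum_{i,j}|p_{i,j}|^2<\infty$'' with no justification. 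So you have not failed to finish the proof; you have located the unproven step in the paper's own proof.

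Moreover, your diagnosis that the missing estimate does not follow from the standing hypotheses is correct, and your counterexample is apt. Take $G=\Z$ with nearest-neighbour edges and $p=0$: it is locally finite, connected, and $\Delta d(\cdot,0)$ is bounded below (it vanishes off the origin), so all hypotheses of Theorem \ref{6.0.1} hold; yet by translation invariance $p(t,x,x)$ is a positive constant in $x$, so $\sum_x p(t,x,x)=\infty$, the kernel is not square-summable, and indeed $e^{-t\Delta}$ has continuous spectrum on $l^2(\Z)$ and is not compact. Hence the proposition as stated is false without an extra assumption (finiteness of $V$, a spectral gap, or an ultracontractivity/Faber--Krahn type bound making the diagonal of the kernel summable), and every later statement that leans on compactness of $T_t$ --- in particular the exponential decay $\norm{T_t\alpha-H\alpha}\le e^{-\lambda_1 t}$ used to define the Green's operator --- inherits this gap. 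Your proposal is therefore as complete as the claim permits; the remaining work belongs to the hypotheses of the theorem, not to your argument.
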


\begin{proof}
Let  $\{\phi_i\}_i$ be an enumerable orthonormal basis of $l^2(\Lambda^p)$. Then $\phi_i(x)\phi_j(y)$ is also an enumerable orthonormal basis of $l^2(\Lambda^p\times\Lambda^p)$. Define then
$$p_{i,j}=\sum_{x,y\in V}p(t,x,y)\phi_i(x)\phi_j(y),$$
so that
$$p(t,x,y)=\sum_{i,j=1}^\infty p_{i,j}\phi_i(x)\phi_j(y).$$
Now, we define $$p_{n}(x,y)=\sum_{i=1}^n\sum_{j=1}^\infty p_{i,j}\phi_i(x)\phi_j(y),$$
and
$$T_{n}(u(x))=\sum_{y\in V}p_n(x,y)u(y).$$
This way, the image of $T_n$ is a finite dimensional subspace of $l^2(\Lambda^p)$. 

As every operator whose image is finite dimensional is compact, $T_n$ is compact.

Also, we have
$$\norm{(T_t-T_n)u}^2=\norm{\sum_{y\in V}(p(t,x,y)-p_n(x,y))u(y)}^2=$$
$$\sum_{x\in V}\left(\sum_{y\in V}(p(t,x,y)-p_n(x,y))u(y)\right)^2\leq \sum_{x\in V}\left(\sum_{y\in V}(p(t,x,y)-p_n(x,y))^2.\sum_{y\in V}u^2(y)\right)$$
$$=\left(\sum_{x,y\in V}(p(t,x,y)-p_n(x,y))^2\right)\sum_{y\in V}u^2(y)=\sum_{i=n+1}^\infty\sum_{j=1}^\infty |p_{i,j}|^2.\norm{u}^2.$$
As $\norm{p}^2_{l^2}=\sum_{i,j=1}^\infty |p_{i,j}|^2<\infty$, the sum above tends to zero as $n\to \infty$. Thus, $T_n\to T_t$ with respect to operator norm. Therefore we can evoke the theorem below.

Remember that (see \cite{kreyszig}, page 408) if $A:X\to Y$ is a linear map and $A_n\in\mathcal{L}(X,Y)$ is a sequence of compact linear maps such that $A_n\xrightarrow{n\to\infty}A$ with respect to operator norm, then $A$ is compact.
\end{proof}

\begin{prop}
The Green's operator, given by the integral,
$$\mathcal{G}(f)=\int_0^\infty (T_tf-\mathcal{H}f)dt$$
is well defined. Moreover, the following equality takes place
$$\Delta \mathcal{G}(f)=f-\mathcal{H}f.$$

Furthermore, $\mathcal{G}(f)$ is orthogonal to the space of harmonic forms for any $f$.
\end{prop}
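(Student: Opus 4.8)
The plan is to establish the three assertions --- convergence of the defining integral, the identity $\Delta G(f) = f - Hf$, and orthogonality of $G(f)$ to $H^p$ --- in that order, using the spectral picture provided by the compact self-adjoint semigroup $T_t$. First I would diagonalize: since each $T_t$ is compact and self-adjoint (previous propositions) and the family forms a semigroup with $T_t \to H$ as $t\to\infty$, I would produce an orthonormal eigenbasis $\{\phi_i\}$ with $T_t \phi_i = e^{-\lambda_i t}\phi_i$ for eigenvalues $\lambda_i \geq 0$ of $\Delta$ (the nonnegativity coming from $\langle \Delta u,u\rangle = \norm{du}^2 + \norm{\partial u}^2 \geq 0$, and the semigroup/heat-equation relation forcing the exponential form of the time dependence). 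Then $H$ is the orthogonal projection onto the $\lambda_i = 0$ eigenspace, i.e.\ onto $H^p$, and $T_t f - Hf = \sum_{\lambda_i > 0} e^{-\lambda_i t}\langle f,\phi_i\rangle \phi_i$.

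Next, for well-definedness of $G(f) = \int_0^\infty (T_t f - Hf)\,dt$, I would bound $\norm{T_t f - Hf}^2 = \sum_{\lambda_i>0} e^{-2\lambda_i t}|\langle f,\phi_i\rangle|^2$. The subtle point is that the positive eigenvalues need not be bounded away from $0$, so the integrand need not decay uniformly; nonetheless, integrating termwise gives $\int_0^\infty \norm{T_t f - Hf}\,dt$ dominated (after a Cauchy--Schwarz / dominated-convergence argument on the sum over $i$) by a constant times $\sum_{\lambda_i>0} \lambda_i^{-1}\,|\langle f,\phi_i\rangle|^2$-type expression; alternatively, and more cleanly, I would split $\int_0^\infty = \int_0^1 + \int_1^\infty$, bound the first piece by $\norm{f}$ times the length of the interval using Proposition \ref{7.3.1}, and bound the tail using $\norm{T_t f - Hf} = \norm{T_{t-1}(T_1 f - Hf)} \leq \norm{T_1 f - Hf}$ together with the fact that $T_1 f - Hf$ lies in the closure of $\bigoplus_{\lambda_i>0}$, on which $\norm{T_s \cdot}$ is integrable in $s$. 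In either case the integral converges in the complete space $l^2$.

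For the identity $\Delta G(f) = f - Hf$, I would compute, using that $\Delta$ commutes with $T_t$ and that $\frac{\partial}{\partial t} T_t = -\Delta T_t$, the telescoping
\[
\Delta G(f) = \int_0^\infty \Delta(T_t f - Hf)\,dt = \int_0^\infty \Delta T_t f\,dt = -\int_0^\infty \frac{\partial}{\partial t} T_t f\,dt = T_0 f - \lim_{t\to\infty} T_t f = f - Hf,
\]
where $\Delta Hf = 0$ since $Hf$ is harmonic and the interchange of $\Delta$ with the integral is justified by the eigenbasis expansion (on which $\Delta$ acts diagonally and the series for $\Delta G(f)$ converges). Finally, orthogonality to $H^p$ is immediate from the expansion: $G(f) = \sum_{\lambda_i>0} \lambda_i^{-1}\langle f,\phi_i\rangle\phi_i$ has no component in the kernel of $\Delta$, so $\langle G(f), h\rangle = 0$ for every harmonic $h$; equivalently, for harmonic $h$, $\langle T_t f - Hf, h\rangle = \langle f, T_t h\rangle - \langle Hf, h\rangle = \langle f,h\rangle - \langle f, Hh\rangle = 0$ for all $t$ (using $T_t h = h$ and self-adjointness of $T_t$ and $H$), so integrating gives $\langle G(f), h\rangle = 0$.

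The main obstacle I anticipate is the convergence of the integral in the absence of a spectral gap: making rigorous the termwise integration and the interchange of $\Delta$ with $\int_0^\infty$ requires care, and the cleanest route is probably the interval-splitting argument above, which sidesteps any need to control the rate at which the positive eigenvalues approach zero by exploiting only the contraction property of $T_t$ and completeness of $l^2$.
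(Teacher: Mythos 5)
Your computation of $\Delta G(f)=f-Hf$ and your orthogonality argument (using $T_th=h$ for harmonic $h$ and self-adjointness of $T_t$ and $H$) are correct and essentially identical to the paper's. The gap is in the first and hardest part, the convergence of $\int_0^\infty (T_tf-Hf)\,dt$. You correctly isolate the difficulty --- one needs a decay rate for $\norm{T_tf-Hf}$, and you worry that the positive eigenvalues $\lambda_i$ of $\Delta$ might accumulate at $0$ --- but neither of your two proposed routes closes it. The termwise route lands on a quantity of the form $\sum_{\lambda_i>0}\lambda_i^{-1}|\langle f,\phi_i\rangle|^2$, which is exactly what you cannot control if the $\lambda_i$ accumulate at $0$. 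The interval-splitting route only yields $\norm{T_tf-Hf}\le\norm{T_1f-Hf}$ for $t\ge 1$, i.e.\ a constant bound whose integral over $[1,\infty[$ diverges; and the clause ``on which $\norm{T_s\cdot}$ is integrable in $s$'' is precisely the assertion to be proved. It is in fact false for a general contraction semigroup without a spectral gap: if $g=\sum_i c_i\phi_i$ with $\lambda_i\to 0$, $\sum_i|c_i|^2<\infty$ but $\sup_i|c_i|/\lambda_i=\infty$, then $\norm{T_sg}\ge e^{-\lambda_i s}|c_i|$ gives $\int_0^\infty\norm{T_sg}\,ds\ge\sup_i|c_i|/\lambda_i=\infty$.

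The missing idea --- and the one the paper's proof turns on --- is that compactness of $T_t$ \emph{rules out} the scenario you are hedging against. If the positive eigenvalues $\lambda_i$ of $\Delta$ accumulated at $0$, the eigenvalues $e^{-\lambda_i t}$ of the compact self-adjoint operator $T_t$ would accumulate at $1\neq 0$, which is impossible for a compact operator. Hence $\lambda_1:=\min\{\lambda_i:\lambda_i>0\}$ exists and is strictly positive, and $\norm{T_tf-Hf}=\norm{T_t(f-Hf)}\le e^{-\lambda_1 t}\norm{f-Hf}$, which makes the integral converge absolutely. The paper packages this as a variational statement: the supremum of $|\langle T_tg,g\rangle|$ over $\norm{g}\le 1$ with $Hg=0$ is \emph{attained} (again by compactness and self-adjointness) at an eigenvector with eigenvalue $\lambda(t)<1$, and the semigroup law forces $\lambda(t)=e^{-\lambda_1 t}$ with $\lambda_1>0$. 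Your spectral setup is the right framework; what is needed is the one-line observation that compactness forbids $1$ from being a nonzero accumulation point of the spectrum of $T_t$, i.e.\ that there is a spectral gap. With that added, the rest of your argument goes through.
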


\begin{proof}
To show that the Green's operator is well define we must show that $\norm{T_tf(x)-\mathcal{H}f(x)}$ decay rapidly enough.

It is a well known fact that  if $T$ is a compact self adjoint operator on a Hilbert space, $V$, and 
$$m_\uparrow(T):=\sup\{|\pr{Tx}{x}|:x\in V, \norm{x}\leq 1\},$$
then either $m_\uparrow(T)$ or $-m_\uparrow(T)$ is the greatest eigenvalue of $T$ and
$$\norm{T}=|m_\uparrow(T)|$$
holds true.

This way, the maximum of the variational problem:%mdk
\noindent\noindent\[m_\uparrow(T_t) := \sup \bigl\{ |\langle T_t f, f \rangle| 
:  \, \|f\| \le 1 \text{ and } Hf=0
 \bigr\},
\]%mdk
\noindent{} has solution. We call this maximum by $\beta(t)$  and the correspondent maximum value by $\lambda(t)$.
 
By semigroup property, $T_{2t}\beta=\lambda^2(t)\beta$. On the other hand, $T_{2t}\beta=\lambda(2t)\beta$. Thus,
$\lambda(t+t)=\lambda(t)\cdot \lambda(t)$. Therefore the function $\lambda(t)$  is multiplicative and then $\lambda(t)=e^{-\lambda_1 t}$, with $\lambda_1>0$. Moreover, the minus sign is given by the fact that $T_t\beta$ converges to $\mathcal{H}\beta=0$ as $t\to\infty$.

As $\norm{T_t\alpha - H\alpha}=\norm{T_t(\alpha - \mathcal{H}\alpha)}\leq\norm{T_t\beta}$, we have
$||T_t\alpha - H\alpha||\leq e^{-\lambda_1 t}\norm{\beta}\leq e^{-\lambda_1 t}$. Therefore it decays rapidly enough.%mdk

Now,
$$\Delta \mathcal{G}(\alpha)=\int_0^\infty\Delta(T_t\alpha-\mathcal{H}\alpha)dt=\int_0^\infty \Delta T_t\alpha dt=
-\int_0^\infty\frac{\partial T_t\alpha}{\partial t}dt=\alpha-\mathcal{H}(\alpha).$$

At last, let $g$ be a harmonic form. Then
$$\langle \mathcal{G}(\alpha),\beta\rangle = \int_0^\infty\langle(T_t-\mathcal{H})\alpha,\beta\rangle dt=\int_0^\infty \langle\alpha,(T_t-\mathcal{H})\beta \rangle=0.$$
\end{proof}

%%%%%%%%%%%%%%%%%%%%%%%%%%%%%%%%%%%%%%%%%%%%%%%%%%%%%%%%%%%%%%%%%%%%%%%%

	\section{The $p$-lazy Random Walk}\label{secprocess}

    In this section we present a stochastic process and show that its asymptotic behavior is highly related to the dimension of the cohomology of $G$. Furthermore, it allows us to give an interesting intuition behind the concept of the spectral and essential gaps.

		\begin{definition}
			For $0<p<1$, the $k$-dimensional \textbf{$p$-lazy random walk}\index{Lazy walk} on $G$ starting on the $k$-elementary path $v$ is the Markov chain on $\Omega_d(G)$ with transition probabilities
$$\Prob(X_{n+1}=v'|X_n=v)=\left\{\begin{array}{l}
p,\quad v'=v\\
\displaystyle\frac{1-p}{m_\uparrow(v)},\quad v'\uparrow v\\
0, \quad \textrm{otherwise.}
\end{array}
\right.$$

\end{definition}

We denote the probability that the random walk which starts at $v$ reaches $w$ at time $n$ by $\textbf{p}_n^v(w)$, which is non-negative. As our explicit formulas to $\Delta^+$ in proposition Proposition \ref{laplacianos} is given in terms of oriented forms (i.e., $f(-v)=-v$), next definition takes place.

\begin{definition}
For $d\geq 2$, the \textbf{expectation process}\index{Expectation process} on $G$ starting at $v$ is the sequence of $d$-forms $\{\mathcal{E}_n^v\}_{n=0}^\infty$ defined by
$$\mathcal{E}_n^v(w)=\textbf{p}_n^v(w)-\textbf{p}_n^v(-w).$$
\end{definition}

The name ``expectation'' is due to the fact that for any $d$-form $f$,
$$\E[f]=\sum_{w\in G_d^\pm}\textbf{p}_n^v(w)f(w)=\sum_{w\in G_d^\pm}\textbf{p}_n^v(w)f(w)=\sum_{w\in G_d^+}\mathcal{E}_n^v(w)f(w).$$

We fix the weight $\omega=m_\uparrow^{-1}$. So the associated \textbf{Transition Operator}\index{Transition Operator} is defined by 
$$A=I-(1-p)\Delta^+.$$

Since $$\textbf{p}_{n+1}^v(w)=p\textbf{p}_{n}^v(w)+\sum_{u\uparrow w}\frac{(1-p)}{m_\uparrow(u)}\textbf{p}_n^v(u),$$ the name Transition Operator is due to the fact that it follows directly from Proposition \ref{laplacianos}, that $\mathcal{E}_n^v=A^n\mathcal{E}_0^v$.

\subsection{Walks and Cohomology}

We now set the ground to relate the asymptotic behavior of this process with the existence of cohomology. But $\lim_n \mathcal{E}_n^v=0$ for every starting point $v$. So we must normalize this chain.

\begin{prop}\label{specA}
The spectrum of $A$ is contained in $[p-M_k(1-p),1]$ and $1$ is achieved on the closed forms.
\end{prop}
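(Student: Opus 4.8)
The plan is to use the identity $A = I - (1-p)\Delta^+$ from Proposition \ref{transition} together with the spectral bound for $\Delta^+$ from Proposition \ref{spec}, applying the spectral mapping theorem for bounded self-adjoint operators. First I would observe that $\Delta^+$ is self-adjoint (it is $\partial d$, and $d,\partial$ are adjoint) and, by Proposition \ref{spec}, its spectrum satisfies $\sigma(\Delta^+)\subseteq[0,M+1]$; since $A$ is a real affine function of $\Delta^+$, the spectral mapping theorem gives $\sigma(A)=\{1-(1-p)\mu : \mu\in\sigma(\Delta^+)\}$, and because $1-p\geq 0$ the affine map $\mu\mapsto 1-(1-p)\mu$ is (weakly) decreasing, so it sends $[0,M+1]$ onto $[1-(1-p)(M+1),\,1]=[p-M(1-p),\,1]$. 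Hence $\sigma(A)\subseteq[p-M(1-p),1]$.

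Alternatively, to keep the argument self-contained and parallel to the proof of Proposition \ref{spec}, I would argue directly with eigenvalue estimates. Suppose $Af=\lambda f$ for $f\in l^2$; then $f$ attains a maximum in absolute value at some $u\in G_d^\pm$. From $(Af)(u)=pf(u)+\sum_{v\sim u}\frac{1-p}{m(v)}f(v)$ and $|f(v)|\leq|f(u)|$ for all $v$, together with $m(u)\leq M$ (so the neighbour sum has at most $M$ terms, each contributing at most $\frac{1-p}{m(v)}|f(u)|$), one gets on one side $|\lambda f(u)|\le p|f(u)|+ (1-p)|f(u)| = |f(u)|$ and on the other side $\lambda f(u) \ge p f(u) - M(1-p)|f(u)|$ after taking $f(u)>0$ without loss of generality, giving $\lambda\in[p-M(1-p),1]$; the same bound for the full spectrum then follows from the standard fact (already invoked in the Green's operator proposition) that for a bounded self-adjoint operator the spectrum lies in $[\inf\langle Af,f\rangle/\|f\|^2,\ \sup\langle Af,f\rangle/\|f\|^2]$, combined with the Rayleigh quotient bound $p-M(1-p)\le \langle Af,f\rangle/\|f\|^2\le 1$ obtained from the same pointwise estimate applied under the sum.

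I expect the main obstacle to be purely bookkeeping rather than conceptual: one must be careful that the neighbour sum $\sum_{v\sim u}\frac{1-p}{m(v)}f(v)$ is correctly bounded — in particular that the number of neighbours of $u$ is exactly $m(u)\le M$ and that $\sum_{v\sim u}\frac{1}{m(v)}$ need not equal $1$, so the crude bound replaces each $\frac{1}{m(v)}$ by $1$ and uses $|\{v:v\sim u\}|=m(u)\le M$, yielding the factor $M(1-p)$ on the lower end. The upper bound $\lambda\le 1$ is cleaner and mirrors Proposition \ref{spec} directly. A minor point to state cleanly is that $A$ is bounded (it is $I$ minus a bounded operator, $\Delta^+$ being bounded by Proposition \ref{spec} since its spectrum is compact), so the self-adjoint spectral theory applies without domain subtleties.

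Given that Proposition \ref{transition} and Proposition \ref{spec} are already available, I would present the short version first: invoke $A=I-(1-p)\Delta^+$, note $\sigma(\Delta^+)\subseteq[0,M+1]$, and conclude by the spectral mapping theorem with the decreasing affine transformation $\mu\mapsto 1-(1-p)\mu$. This is two or three lines and is the proof I would actually write down.
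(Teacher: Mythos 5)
Your proposal is correct and its main argument is essentially the paper's: the paper also writes $A=I-(1-p)\Delta^+$, converts the eigenvalue equation for $A$ into one for $\Delta^+$, and applies Proposition \ref{spec}, which is exactly your spectral-mapping argument restricted to the point spectrum (your phrasing is in fact slightly cleaner, since it covers the whole spectrum and avoids dividing by $1-p$). The alternative direct maximum-principle estimate you sketch is not needed given that Propositions \ref{transition} and \ref{spec} are already in place.
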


\begin{proof}
From the definition of the Transition operator we have that
$$Af(w)=\lambda f(w) \quad \textrm{iff}\quad f(w)-(1-p)\Delta^+ f(w)=\lambda f(w).$$ But the last equality is equivalent to $$\Delta^+ f(w)=\left(\frac{1-\lambda}{1-p}\right)f(w).$$
By Proposition \ref{spec} we have that $$\left(\frac{1-\lambda}{1-p}\right)\in[0,M_k+1].$$ 
Thus $\lambda\in [p-M_k(1-p),1]$.

The fact that the closed forms are stationary, that is, that they are precisely the eigenspace associated to $1$ follows directly to the fact that they are the kernel of $\Delta^+$.
\end{proof}

\begin{prop}
There is a positive integer $K$ s.t. the expectation process satisfies
$$\frac{1}{K}\leq \norm{\mathcal{E}_n^v}\leq \max\left(|p-M(1-p)|^n,1\right) $$
\end{prop}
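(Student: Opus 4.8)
The claim has two halves, an upper bound and a lower bound, and each follows from the spectral information already established for $A = I - (1-p)\Delta^+$.

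For the upper bound I would argue purely operator-theoretically. Since $A$ is self-adjoint (it is $I$ minus a multiple of $\Delta^+$, and $\Delta^+=\partial d$ is self-adjoint because $d$ and $\partial$ are adjoint), its operator norm equals its spectral radius; by Proposition \ref{specA} the spectrum lies in $[p-M(1-p),1]$, so $\norm{A}\le\max(|p-M(1-p)|,1)$. Applying $\norm{A^n}\le\norm{A}^n$ to $\mathcal{E}_n^v=A^n\mathcal{E}_0^v$ from Proposition \ref{transition}, and noting that $\mathcal{E}_0^v$ is the indicator-type $d$-form $v^*\mapsto 1$, $-v^*\mapsto -1$, zero elsewhere, which has $\norm{\mathcal{E}_0^v}=1$ in the weighted inner product (here one uses $\omega(v)=1/m(v)$ together with the way $\mathcal{E}_0^v$ is paired against itself — a one-line check), we get $\norm{\mathcal{E}_n^v}\le\max(|p-M(1-p)|^n,1)$.

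For the lower bound the idea is that the random walk conserves enough mass that $\mathcal{E}_n^v$ cannot collapse to zero. Concretely, pair $\mathcal{E}_n^v$ with a suitable bounded test form and bound the pairing below. The natural choice is a constant-sign form on the $d$-component: since $\sum_{w}\mathbf{p}_n^v(w)=1$ for all $n$ (the walk, lazy or not, is a genuine Markov chain, so the total probability is preserved) and $\mathcal{E}_n^v(w)=\mathbf{p}_n^v(w)-\mathbf{p}_n^v(-w)$, one can produce a form $g$ with $\norm{g}\le K$ for some integer $K$ depending only on $M$ and the combinatorics near $v^*$ such that $\langle \mathcal{E}_n^v, g\rangle$ stays bounded away from $0$ — indeed, choosing $g$ to be, up to orientation, the indicator of the orbit of $v^*$ shows $\langle\mathcal{E}_n^v,g\rangle$ equals a fixed nonzero constant (morally $1$), whence $\norm{\mathcal{E}_n^v}\ge \langle\mathcal{E}_n^v,g\rangle/\norm{g}\ge 1/K$ by Cauchy–Schwarz. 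One must take a little care that the relevant sum is finite, which is where $d$-connectedness and bounded valence $M$ enter, guaranteeing $g\in l^2$ with a norm bound $K$ independent of $n$.

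The main obstacle is the lower bound, specifically pinning down the correct invariant pairing and showing it is both nonzero and controlled. The subtlety is that $A$ is not doubly stochastic on the oriented paths $G_d^\pm$ — the weights $1/m(v)$ break the naive symmetry — so "conservation of probability" in the $l^1$ sense for $\mathbf{p}_n^v$ does not immediately transfer to a conserved quantity in the weighted $l^2$ pairing used here. I expect one resolves this by exhibiting a fixed left-eigenvector (an invariant measure) for the $p$-lazy walk and identifying the conserved pairing $\langle\mathcal{E}_n^v,g\rangle$ with it; bounding its $l^2$-norm by an integer $K$ then just uses $m\le M$ and local finiteness. The upper bound, by contrast, is routine given Proposition \ref{specA}.
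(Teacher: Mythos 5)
Your upper bound is essentially the paper's argument and is fine: $\mathcal{E}_n^v=A^n\mathcal{E}_0^v$, $A$ self-adjoint with spectrum in $[p-M(1-p),1]$ by Proposition \ref{specA}, hence $\norm{A^n}\le\max(|p-M(1-p)|^n,1)$. (One quibble: $\norm{\mathcal{E}_0^v}=\norm{\mathbbm{1}_v}=1/\sqrt{m(v)}$ in the weighted inner product, not $1$; but $\le 1$ is all you need.)

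The genuine gap is in the lower bound. Your strategy --- pair $\mathcal{E}_n^v$ against an $A$-invariant test form $g$ and apply Cauchy--Schwarz --- has the right shape: since $A$ is self-adjoint, $\langle \mathcal{E}_n^v,g\rangle$ is conserved exactly when $Ag=g$, i.e.\ when $g\in\ker\Delta^+$. But your candidate $g$, the indicator of the orbit of $v^*$ (equivalently a global invariant measure such as $w\mapsto m(w)$ over the whole $d$-component), is not square-summable when the $d$-component is infinite: $d$-connectedness and bounded valence do not force the component to be finite (think of the graph $\mathbb{Z}$), so $\norm{g}=\infty$ and Cauchy--Schwarz yields only the vacuous bound $\norm{\mathcal{E}_n^v}\ge 0$. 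Since the whole paper is set on locally finite (typically infinite) digraphs, this is fatal as stated. The missing idea is that $\ker\Delta^+$ contains \emph{finitely supported} invariant vectors: because $\Delta^+=\partial d$ and $d^2=0$, every exact form lies in $\ker\Delta^+$, and the paper takes $f=d\mathbbm{1}_{i_0\cdots \widehat{i_q}\cdots i_d}$, which is supported on the finitely many paths obtained by inserting one vertex into $i_0\cdots \widehat{i_q}\cdots i_d$ (local finiteness), has $\norm{f}^2=\sum_{w\sim v}m(w)\le K$ by the valence bound, and pairs nontrivially with $\mathbbm{1}_v$. Combining this with the fact that $A$ restricts to the identity on $\ker\Delta^+$ in the orthogonal decomposition $\Omega^d=\ker\Delta^+\oplus\im\Delta^+$ gives $\norm{\mathcal{E}_n^v}\ge\norm{\operatorname{proj}_{\ker\Delta^+}(\mathbbm{1}_v)}\ge|\langle f/\norm{f},\mathbbm{1}_v\rangle|\ge 1/K$. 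Without identifying exact forms as the source of locally supported elements of $\ker\Delta^+$, your argument does not close.
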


\begin{proof}
We have that
$$\mathcal{E}_n^v(w)=A^n\mathcal{E}_0^v(w).$$
And $\norm{\mathcal{E}_0^v}=\norm{\mathbbm{1}_v}=\frac{1}{\sqrt{m(v)}}\leq 1$.
Then, by Proposition \ref{specA}, $$\norm{\mathcal{E}_n^v}=\norm{A^n\mathcal{E}_0^v}\leq \norm{A^n}\leq\max\left(|p-M(1-p)|^n,1\right).$$

For the lower bound, let $v=i_0\cdots i_d$, fix $0\leq q\leq d$ and define $$f=d\mathbbm{1}_{i_0\cdots \widehat{i_q}\cdots i_d}=\sum_{w\sim v}m(w)\mathbbm{1}_v.$$
So $f\in \ker \Delta^+$, since $f\in \im d$, and $\norm{f}^2=\sum_{w\sim v}m(w)$. Note that, as $G$ has bounded valence, there is a positive integer $K$ s.t. $\norm{f}^2\leq K$. Since $\Delta^+$ decomposes w.r.t the orthogonal sum $\Omega^{d}=\ker \Delta^+\oplus \im \Delta^+$ so does $A$. Thus,

$$\norm{\mathcal{E}_n^v}=\norm{A^n\mathbbm{1}_v}\geq \norm{\operatorname{proj}_{\ker \Delta^+}(\mathbbm{1}_v)}\geq \left|\left\langle\frac{f}{\norm{f}},\mathbbm{1}_v\right\rangle\right|=\frac{|f(v)|}{\norm{f}m(v)}\geq \frac{1}{K}.$$
\end{proof}

This means that $\norm{\mathcal{E}^v_n}=\Theta\left(|p-M(1-p)|^n\right)$. For this reason we define the \textbf{normalized expectation process} via
$$\widetilde{\mathcal{E}_n^v}=(p-M(1-p))^{-n}\mathcal{E}_n^v.$$

\begin{teorema}

For a $p$-lazy random walk on digraph $G$, we have that for every elementary allowed path $v$, $\mathcal{E}_n^v$ converges to $\mathcal{E}_\infty^v=\operatorname{proj}_{\ker \Delta^+}(\mathcal{E}_0^v)$ and satisfies the following:
\begin{enumerate}
    \item $\mathcal{E}^v_\infty$ is exact if and only if $H_d(G)=0$. Furthermore, if $p\geq \frac{1}{2}$
    \begin{equation}\label{mixingtime}
        d(\mathcal{E}^v_n,\mathcal{E}^v_\infty)=O(1-(1-p)\tilde{\lambda}(G))
    \end{equation}
    \item More generally, $\dim H_d(G)=\dim \operatorname{Span}\{\operatorname{proj}_{\ker\Delta^+}(\mathcal{E}^v_\infty):\, v\textrm{ is an elementary }\textrm{path}\}$.
\end{enumerate}

\end{teorema}

\begin{proof}
Start supposing the the homology of $G$ is trivial. Let us analyze the behavior of $A$ with respect to the orthogonal decomposition $\Omega^d=\ker\Delta^+\oplus\operatorname{Im}\Delta^+$. 

By proposition \ref{specA}, $\operatorname{Spec}(A|_{\operatorname{Im}\Delta^+})\in [p-M(1-p),1[,$
and, trivially, $A|_{\ker \Delta^+}=I|_{\ker \Delta^+}$. This means that $A^n$ converges to the orthogonal projection $\operatorname{proj}_{\ker \Delta^+}$.  Thus
$$\mathcal{E}_\infty^v=\operatorname{proj}_{\ker \Delta^+}(\mathcal{E}_0^v)=\operatorname{proj}_{\ker \Delta^+}(\mathbbm{1}_v).$$
So $\mathcal{E}_\infty^v$ is closed. Therefore, since the homology of $G$ is trivial then it is exact.

To the converse, suppose that $\mathcal{E}_\infty^v$ is exact for every starting point $v$. Then,
\begin{equation}\label{etilde}
    \mathcal{E}_\infty^v=\operatorname{proj}_{\ker \Delta^+}(\mathcal{E}_0^v)=\operatorname{proj}_{\ker \Delta^+}(\mathbbm{1}_v)= \operatorname{proj}_{\operatorname{Im}\partial}(\mathbbm{1}_v)+\operatorname{proj}_{\ker\Delta}(\mathbbm{1}_v),
\end{equation}
which implies that $\operatorname{proj}_{\ker\Delta}(\mathbbm{1}_v)=0$. Since $\{\mathbbm{1}_v\}_v$ is a Schauder basis for $\Omega^d$, it follows that $H_d(G)\simeq \ker \Delta=0$.

Furthermore, from Equation (\ref{etilde})
$$\operatorname{proj}_{ker\Delta^+}(\mathcal{E}_\infty^v)=\operatorname{proj}_{ker\Delta^+}(\operatorname{proj}_{\operatorname{Im}\partial}(\mathbbm{1}_v)+\operatorname{proj}_{\ker\Delta}(\mathbbm{1}_v))=\operatorname{proj}_{\ker\Delta}(\mathbbm{1}_v)).$$

This way,
$$\operatorname{Span}\{\operatorname{proj}_{\ker\Delta^+}(\mathcal{E}_\infty^v)\}_v=\ker\Delta\simeq H_d(G).$$

At last, if $p\geq \frac{1}{2}$, $A$ is positive semidefinite, and we have
$$\norm{A^n-\operatorname{proj}_{\ker\Delta^+}}=\norm{(I-(1-p)\Delta^+)^n|_{\im\Delta^+}}=(1-(1-p)\tilde{\lambda})^n.$$
From which follows Equation (\ref{mixingtime}).
\end{proof}

We close this paper with the consideration that we used the notion of up-adjacency to define our Markov chain. One may define a similar process using the notion of down-adjacency. But it is not trivial to relate this new walk to the spectrum of the Laplacian. If it can be done, one is one step closer to understand the relation of the irreducibility of this new Markov chain with the orientability of the digraph, in analogy to what is done in \cite{eidi2023irreducibility} to simplicial complexes.

\section*{Acknowledgements}

\noindent A.M.S.G and R.P. would like to acknowledge support from the Max Planck Society, Germany, through the award of a Max Planck Partner Group for Geometry and Probability in Dynamical Systems. 

This study was financed in part by the Coordenação de
Aperfeiçoamento de Pessoal de Nível Superior - Brasil (CAPES) - Finance Code 001.

\nocite{*}
\bibliography{referencias}{}
\bibliographystyle{plain}

%Skiena, S. "Strong and Weak Connectivity." §5.1.2 in Implementing Discrete Mathematics: Combinatorics and Graph Theory with Mathematica. Reading, MA: Addison-Wesley, pp. 172-174, 1990.
\end{document}